\tikzstyle{edge}=[very thick]
\definecolor{bostonuniversityred}{rgb}{0.8, 0.0, 0.0}
\definecolor{arsenic}{rgb}{0.23, 0.27, 0.29}
\tikzstyle{diredge}=[postaction={decorate,decoration={markings,
\newcommand{\defPt}[3]{
	\def \pt {(#1, #2)}
	\coordinate [at = \pt, name = #3];
}
\newcommand{\defPtm}[2]{

	\coordinate [at = #1, name = #2];
}
\newcommand{\fitellipsis}[2] 
{\draw [fill=green]let \p1=(#1), \p2=(#2), \n1={atan2(\y2-\y1,\x2-\x1)}, \n2={veclen(\y2-\y1,\x2-\x1)}
    in ($ (\p1)!0.5!(\p2) $) ellipse [ x radius=\n2/2+0cm, y radius=0.1cm, rotate=\n1];
}
\theoremstyle{plain}
\newtheorem*{thm*}{Theorem}
\newtheorem{thm}{Theorem}
\Crefname{thm}{Theorem}{Theorems}
\numberwithin{thm}{section}
\newtheorem*{lem*}{Lemma}
\newtheorem{lem}[thm]{Lemma}
\Crefname{lem}{Lemma}{Lemmas}
\newtheorem*{claim*}{Claim}
\crefname{claim}{Claim}{Claims}
\Crefname{claim}{Claim}{Claims}
\Crefname{prop}{Proposition}{Propositions}
\newtheorem{cor}[thm]{Corollary}
\crefname{cor}{Corollary}{Corollaries}
\crefname{conj}{Conjecture}{Conjectures}
\Crefname{qn}{Question}{Questions}
\newtheorem{obs}[thm]{Observation}
\Crefname{obs}{Observation}{Observations}
\Crefname{ex}{Example}{Examples}
\theoremstyle{definition}
\Crefname{prob}{Problem}{Problems}
\Crefname{defn}{Definition}{Definitions}
\newtheorem*{defn*}{Definition}
\theoremstyle{remark}
\newtheorem*{rem}{Remark}
\renewenvironment{proof}[1][]{\begin{trivlist}
\item[\hspace{\labelsep}{\bf\noindent Proof#1.\/}] }{\qed\end{trivlist}}
\newcommand{\floor}[1]{
    \left\lfloor #1 \right\rfloor
}
\newcommand{\eps}{\varepsilon}
\def\expandafter\normalsize\expandafter{%
    \normalsize
    \setlength\abovedisplayskip{4pt}
    \setlength\belowdisplayskip{4pt}
    \setlength\abovedisplayshortskip{4pt}
    \setlength\belowdisplayshortskip{4pt}
}
\newcommand{\HH}{\mathcal{H}}
\newcommand{\OO}{\mathcal{O}}
\newcommand{\G}{\mathcal{G}}
\renewcommand{\ex}{ex}
\title{\vspace{-0.8cm} Universal and unavoidable graphs}
\author{
Matija Buci\'c\thanks{Department of Mathematics, ETH, Z\"urich, Switzerland. Email: \href{mailto:matija.bucic@math.ethz.ch} {\nolinkurl{matija.bucic@math.ethz.ch}}.}
\and
Nemanja Dragani\'c \thanks{Department of Mathematics, ETH, Z\"urich, Switzerland. Email: \href{mailto:nemanja.draganic@math.ethz.ch} {\nolinkurl{nemanja.draganic@math.ethz.ch}}.}
\and
Benny Sudakov\thanks{Department of Mathematics, ETH, Z\"urich, Switzerland. Email:
\href{mailto:benjamin.sudakov@math.ethz.ch} {\nolinkurl{benjamin.sudakov@math.ethz.ch}}.
Research supported in part by SNSF grant 200021\_196965.}
}
 \date{}
\begin{document}

\maketitle

\begin{abstract}
   The Tur\'an number $\ex(n,H)$ of a graph $H$ is the maximal number of edges in an $H$-free graph on $n$ vertices. In $1983$ Chung and Erd\H{o}s asked which graphs $H$ with $e$ edges minimize $\ex(n,H)$. They resolved this question asymptotically for most of the range of $e$ and asked to complete the picture. In this paper we answer their question by resolving all remaining cases. Our result translates directly to the setting of universality, a well-studied notion of finding graphs which contain every graph belonging to a certain family. In this setting we extend previous work done by Babai, Chung, Erd\H{o}s, Graham and Spencer, and by Alon and Asodi. 
\end{abstract}
\section{Introduction}

\par The following question of Tur\'an dating back to 1941 \cite{Turan1941external} is one of the most classical problems of graph theory. Given a fixed graph $H,$ what is the maximal number of edges one can have in an $n$ vertex graph which does not contain a copy of $H$ as a subgraph? The answer to this question, denoted $\ex(n,H),$ is called the Tur\'an number of $H$. Tur\'an numbers have been extensively studied in the last 70 years, see for example the surveys \cite{sidorenko1995we,keevash2011hypergraph,furedi1991Turan,benny-survey,furedi-survey}.

Turan's problem leads to another very natural extremal question -- what is the largest size of a graph which we can not avoid as a subgraph in any graph on $n$ vertices and $e$ edges? In other words, what kind of a graph $H$ with a fixed number of edges (possibly depending on $n$) has minimal Tur\'an number?    

\par This question was first asked by Chung and Erd\H{o}s \cite{chung1983unavoidable} in 1983 and many questions of a similar flavour were later considered in  \cite{chung1987unavoidable,chung1983unavoidableS,chung1981minimal,duke1977systems}. Some of these questions have also featured in an Erd\H{o}s open problem collection \cite{chung1997open}. 

In this paper we revisit the original question of Chung and Erd\H{o}s. They say a graph $H$ is \emph{$(n,e)$-unavoidable} if every graph on $n$ vertices and $e$ edges contains a copy of $H$ as a subgraph. Let $f(n,e)$ be the maximum number of edges in an $(n,e)$-unavoidable graph. Chung and Erd\H{o}s obtain the following bounds on $f(n,e)$:
\vspace{-0.2 cm}
\begin{enumerate}[label=(\roman*)]
    \item \label{itm:3} $f(n,e)=\Theta\left(\lceil e/n \rceil^2\right)$ if $e \le n^{4/3}$ 
    \item \label{itm:4} $f(n,e)=\Theta\left(\frac{\sqrt{e}\log n}{\log \left(\binom{n}{2}/e\right)}\right)$ if $cn^{4/3} < e < \binom{n}{2}-n^{1+c}$ for any $0<c<1$.
    \item \label{itm:7} 
    $
    \Omega\left(\frac{m^2}{\log^2 m} \right)\leq  \binom{n}{2}-f\left(n,e\right)\leq \OO\left(\frac{m^2\log\log m}{\log m} \right)$ where $m=\binom{n}{2}-e,$ and $\binom{n}{2}-cn<e,$\\ for some $c>0$.
    
\end{enumerate}

In fact for certain regimes from parts (i) and (ii) they obtain even more precise bounds, some of which appear in a later paper \cite{chung1987unavoidable}. However, there is a gap between regimes of part (ii) and (iii) for which their methods fail to give an answer. Given this, they naturally ask what is the correct behaviour of $f(n,e)$ towards the end of the range, so when $e>\binom{n}{2}-n^{1+o(1)}$. In this paper we resolve this question and determine $f(n,e)$ for the remaining values of $e$.

\begin{thm}\label{thm:main}
For any $\eps>0$, if we let $m= \binom{n}{2}-e$  
$$f\left(n,e\right) = \begin{cases} \binom{n}{2}-\Theta\left(\frac{m^2}{\log^2 m}\right)  & \text{ if } m \leq n \log n \\
\Theta\left(\frac{n^3 \log n}{m}\right) & \text{ if } n \log n < m < n^{3/2-\eps}
\end{cases}$$
\end{thm}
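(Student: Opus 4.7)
The natural first step is to recast the problem in terms of \emph{universal graphs}: a graph $H$ on $[n]$ is $(n,e)$-unavoidable if and only if its complement $\bar H$ contains every $n$-vertex graph with $m = \binom{n}{2}-e$ edges as a subgraph, since $H\subseteq G$ on a common labelled vertex set is equivalent to $\bar G\subseteq \bar H$ and we may permute $[n]$ freely. Writing $g(n,m)$ for the minimum number of edges in an $n$-vertex graph universal for the family $\mathcal{F}(n,m)$ of $n$-vertex $m$-edge graphs, the theorem reduces to showing $g(n,m) = \Theta(m^2/\log^2 m)$ when $m\le n\log n$, and $g(n,m) = \binom{n}{2}-\Theta(n^3\log n/m)$ when $n\log n < m < n^{3/2-\eps}$; the two formulas agree at the boundary $m\asymp n\log n$, both giving $\Theta(n^2)$.

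For the first regime, since every $H\in\mathcal F(n,m)$ lives on at most $2m$ non-isolated vertices, it suffices to embed graphs on that few vertices. For the upper bound I would use a probabilistic construction---a random graph on $\Theta(m/\log m)$ vertices with edge count $\Theta(m^2/\log^2 m)$---and apply Janson's inequality together with a case split on the degree sequence of $H$ to show that with positive probability every $H\in\mathcal F(n,m)$ embeds, shaving the extraneous $\log\log m$ factor of Erdős--Chung by a sharper variance estimate on the count of bad embeddings. The matching lower bound is a counting / entropy argument: one exhibits a family of $m$-edge graphs (e.g.\ random graphs on $\Theta(m/\log m)$ vertices) whose sheer number of isomorphism types forces any universal graph to have at least $\Omega(m^2/\log^2 m)$ edges.

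For the second regime the universal graph is nearly complete, so I would parametrise by the set $S\subseteq \binom{[n]}{2}$ of its non-edges and seek the largest $|S|$ for which every $G\in\mathcal F(n,m)$ still packs with $S$. For the upper bound on $g(n,m)$, take $S$ to be a uniformly random set of size $\Theta(n^3\log n/m)$, whose vertex degrees are by Chernoff tightly concentrated around $\Theta(n^2\log n/m)$; then embed any $G\in\mathcal F(n,m)$ into $K_n\setminus S$ in two stages split by a degree threshold $D$: the at most $2m/D$ high-degree vertices of $G$ are placed first via a Hall-type absorption step exploiting the near-regularity of $S$, and the remaining low-degree part is packed against $S$ by a Sauer--Spencer or local-lemma type argument using $D\,\Delta(S) = o(n)$. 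The matching lower bound is a first-moment calculation: if $|S|\gg n^3\log n/m$, the expected number of packings of a uniformly random $G\in\mathcal F(n,m)$ avoiding $S$ drops below one, so some $G$ cannot pack.

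The main obstacle is the two-stage embedding in the second regime: because $\Delta(G)$ can be as large as $m\gg\sqrt n$, no classical packing theorem applies directly, and one must carefully balance the threshold $D$, the typical and maximum degree of the random $S$, and the feasibility of the greedy / absorption completion. This balance is exactly what forces the restriction $m<n^{3/2-\eps}$; pushing $m$ closer to $n^2$ would require a genuinely new idea to pack very high-degree sparse graphs around a random forbidden set.
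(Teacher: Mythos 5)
Your high-level plan (pass to universality, use a random graph for the upper bound, count isomorphism types for the lower bound) is sound, and the lower-bound direction closely matches the paper's counting argument (Lemma 2.1). But both of your upper-bound constructions have genuine gaps, and they are exactly where the paper's main technical work lies.

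\textbf{First regime ($m\le n\log n$).} You reduce to embedding graphs on at most $2m$ non-isolated vertices and propose a random graph on $\Theta(m/\log m)$ vertices, which is essentially the Alon--Asodi $\mathcal{E}(m)$-universal construction. This only works when $m<n/2$, so that $2m<n$ and the universal graph can be padded with isolated vertices to reach order $n$. When $n/2<m\le n\log n$---which is most of this regime---the universal graph must sit on \emph{exactly} $n$ vertices, while an $\mathcal{E}(m)$-universal graph must have at least $2m>n$ vertices (to contain a perfect matching on $m$ edges). So your construction cannot even fit inside $n$ vertices. The paper explicitly flags this obstruction and overcomes it with two ingredients you are missing: a lemma (Lemma 3.2) showing that adding a few full-degree vertices upgrades an almost-spanning universal graph to a spanning one, and a recursive construction (Lemma 3.6 and Corollary 3.7) that builds the spanning universal graph in $\OO(\log^*(n\log n/m))$ layers of shrinking block-sizes and decreasing densities.

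\textbf{Second regime ($n\log n<m<n^{3/2-\eps}$).} Taking $S$ to be a \emph{uniformly} random set of $\Theta(n^3\log n/m)$ non-edges is provably wrong. With that many edges spread uniformly, every vertex is hit by $S$ with overwhelming probability (the expected number of vertices with zero $S$-degree is about $n\,e^{-\Theta(n^2\log n/m)}=o(1)$), so $K_n\setminus S$ has no full-degree vertex. But since $m\ge n\log n>n-1$, the target family $\mathcal{F}(n,m)$ contains graphs with a vertex of degree $n-1$, and such a graph cannot embed into a host graph with no full-degree vertex. You must concentrate the non-edges on a proper subset of the vertices, leaving a large fraction of vertices untouched (full degree). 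This is precisely what the paper does: the universal graph is $3n/4$ full-degree vertices plus a random graph on the remaining $n/4$ with the appropriate domination property, and the embedding is a greedy procedure certified by that domination property rather than by Hall/Sauer--Spencer-type packing (which, as you correctly note, does not apply for $\Delta(H)$ this large).

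In short, the central difficulty the paper solves is the spanning constraint, and neither of your constructions handles it; the almost-spanning-to-spanning lemma, the recursive layered construction, and the deliberate placement of full-degree vertices are the ideas you would need to add.
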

It is worth noting that the unavoidable graph we use in order to obtain the lower bound in the second part of the above theorem is the random Erd\H{o}s-Renyi graph. This is in stark contrast to the very structured graph used by Chung and Erd\H{o}s in regimes of (ii)\footnote{Note that the bound of Chung and Erd\H{o}s agrees, after some simplification, with our bound in \Cref{thm:main}, for the part of the range where they overlap.} and (iii) above. In particular, they use a disjoint union of complete bipartite graphs. Furthermore, for part of the regime both of these very different examples are extremal, up to a constant factor.

\subsection{Universality}
An $(n,e)$-unavoidable graph $H$ on $n$ vertices is contained in every graph $G$ on $n$ vertices and $e$ edges. Another way of saying this is that the complement of $H$ contains the complement of $G$. Since $G$ was arbitrary this means that $H$ is $(n,e)$-unavoidable if and only if its complement contains every graph on $n$ vertices and $m=\binom{n}{2}-e$ edges as a subgraph. This observation, made by Chung and Erd\H{o}s, links unavoidability to perhaps an even more natural and well studied notion, namely that of universality.

For a given family of graphs $\HH$ we call a graph $\HH$-universal if it contains a copy of each graph in $\HH$. Given $\HH$ one usually wants to find the smallest $\HH$-universal graph, with respect to the number of edges or both vertices and edges.
Chung and Graham \cite{chung1979universal} were the first to use this general notion of universality in 1979. In \cite{chung1979universal} they survey a number of results in this setting. Universality problems have been extensively studied ever since, for some examples see \cite{chung1983universal,conlon2017almost,montgomery2019spanning,alon2007embedding,bhatt1989universal} and references therein. 

Following the above observation by Chung and Erd\H{o}s the relevant family in our case is that of all graphs on $n$ vertices and $m$ edges, which we denote as $\HH(n,m)$. We denote by $g(n,m)$ the minimum number of edges in an $\HH(n,m)$-universal graph on $n$ vertices. The above observation of Chung and Erd\H{o}s boils down to the following relation between the functions $f$ and $g$:
\begin{equation}\label{eq:relation}
     g(n,m)=\binom{n}{2}-f\left(n,e\right)\qquad  
\end{equation}
where $m=\binom{n}{2}-e.$ This relation allows us to easily translate results between unavoidability and universality. For example, Theorem \ref{thm:main} is equivalent to the following statement.

\begin{thm}\label{thm:main2}
For any $\eps>0$  we have 
$$g(n,m) = \begin{cases} \Theta\left(\frac{m^2}{\log^2 m}\right)  & \text{ if } m \leq  n \log n \\
\binom{n}{2}-\Theta\left(\frac{n^3 \log n}{m}\right) & \text{ if } n \log n < m < n^{3/2-\eps}
\end{cases}$$
\end{thm}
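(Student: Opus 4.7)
Both bounds yield to probabilistic arguments. The lower bound on $g(n,m)$ in each regime comes from a first-moment computation on a uniformly random $H \in \HH(n,m)$. The upper bound in Regime 2 comes from the Erd\H{o}s-R\'enyi random graph itself (as the authors foreshadow), and the upper bound in Regime 1 from an explicit clique-plus-attachment construction in the spirit of Chung-Erd\H{o}s and Alon-Asodi.

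\textbf{Lower bound.} Fix $G$ on $[n]$ with $|E(G)| = \binom{n}{2} - M$ and sample $H \in \HH(n,m)$ uniformly. For any bijection $\sigma: V(H) \to [n]$, $\pr_H[\sigma(H) \subseteq G] \le (1 - M/\binom{n}{2})^m \le e^{-Mm/\binom{n}{2}}$. Summing over the $n!$ bijections gives expected embeddings at most $n!\,e^{-Mm/\binom{n}{2}}$; if this is $<1$, some $H$ fails to embed into $G$ and $G$ is not universal. In Regime 2 this directly forces $M = O(n^3\log n/m)$. In Regime 1 the same bound is weak and must be combined with a parallel count over a sub-family of hard-to-embed $H$'s (for instance near-regular bipartite graphs with parts of size $\Theta(m/\log m)$ and degree $\Theta(\log m)$) to obtain $|E(G)| = \Omega(m^2/\log^2 m)$.

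\textbf{Upper bound in Regime 2.} Take $G = G(n, p)$ with $p = 1 - cn\log n/m$ for a small constant $c > 0$. For fixed $H \in \HH(n,m)$, let $X_H$ count the embeddings $\sigma: V(H) \to [n]$ with $\sigma(H) \subseteq G$; then $\mu := E[X_H] = n!\,p^m = n^{\Omega(n)}$. Janson's inequality yields $\pr[X_H = 0] \le \exp(-\mu^2/(2\bar{\Delta}))$, where $\bar{\Delta}$ sums $p^{|\sigma_1(H) \cup \sigma_2(H)|}$ over ordered pairs of embeddings sharing at least one edge. Splitting the sum by $k = |\sigma_1(H) \cap \sigma_2(H)| \ge 1$ and bounding the number of pairs sharing exactly $k$ edges via subgraph densities of $H$, the goal is to show $\mu^2/\bar{\Delta} = \omega(m\log n)$; since $|\HH(n,m)| \le \binom{\binom{n}{2}}{m} \le 2^{O(m\log n)}$, a union bound over $H$ then concludes.

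\textbf{Upper bound in Regime 1 and main obstacle.} For $m \le n\log n$ we construct a universal graph consisting of a clique $K_a$ on $a = \Theta(m/\log m)$ vertices (contributing the claimed $\Theta(m^2/\log^2 m)$ edges) augmented by a sparse attachment of the remaining vertices; any $H \in \HH(n,m)$ decomposes into a dense core (embedded in $K_a$) and a low-average-degree tail (embedded via the attachment). The principal difficulty of the whole proof is the Janson analysis of Regime 2: universality must hold uniformly over the vast class $\HH(n,m)$, and the first-moment slack is only just enough to afford the union bound, so the estimate on $\bar{\Delta}$ must be extremely tight. The hypothesis $m < n^{3/2-\eps}$ almost certainly enters here to keep the correlation estimates in $\bar{\Delta}$ under control.
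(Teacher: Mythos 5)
Your lower-bound argument is in the right spirit — it is essentially the same counting as the paper's \Cref{lem:counting}, which for $m \le n\log n$ simply restricts attention to $k = m/\log m$ vertices rather than inventing a special hard subfamily, but both routes give $\Omega(m^2/\log^2 m)$.

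The upper bound in the second regime, however, has a fatal gap. You propose taking $G = \G(n,p)$ with $p = 1 - cn\log n/m$ and running a Janson-plus-union-bound argument. But a pure Erd\H{o}s--R\'enyi graph on exactly $n$ vertices cannot be $\HH(n,m)$-universal once $m \ge n-1$: some $H \in \HH(n,m)$ contain vertices of degree $n-1$ (e.g.\ a star $K_{1,n-1}$ with extra edges among the leaves), so any $\HH(n,m)$-universal graph on $n$ vertices must itself contain a vertex of full degree. For your choice of $p$, the expected number of full-degree vertices in $\G(n,p)$ is $np^{n-1} \approx n\exp(-cn^2\log n/m)$, which tends to $0$ throughout the range $n\log n < m < n^{3/2-\eps}$; so whp $\G(n,p)$ has none, and no amount of second-moment bookkeeping can rescue this deterministic obstruction. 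The paper's construction is therefore \emph{not} a pure random graph: it glues $\Theta(n)$ vertices of full degree onto a random-like graph satisfying a domination property, embeds greedily using that property, and lifts from almost-spanning to spanning universality with \Cref{lem:almost-spanning-to-spaning}. Your proposal also misidentifies the role of $m < n^{3/2-\eps}$: in the paper this bound is what makes the domination parameter $t = k^3/(2^9 m^2)$ polynomially large so \Cref{lem:random-is-pseudorandom} applies, not a constraint on correlation terms in a Janson estimate.

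The first-regime upper bound you sketch (a clique on $\Theta(m/\log m)$ vertices plus a ``sparse attachment'') is essentially the Alon--Asodi construction for $\mathcal{E}(m)$-universal graphs. The paper explicitly notes why this does not transfer: that construction uses more than $4m$ vertices, giving slack unavailable when the universal graph must have exactly $n$ vertices and the graphs being embedded may be spanning. The paper instead builds the universal graph recursively (\Cref{lem:recursive step}), producing nested blocks of decreasing random density, and again closes the spanning gap via \Cref{lem:almost-spanning-to-spaning}. Without something playing the role of that lemma, your decomposition into ``dense core'' and ``low-average-degree tail'' does not by itself explain how the tail vertices are placed when they are entangled with the core.
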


While in the unavoidability case one might arguably think that the very end of the regime is not that interesting since it only involves determining the behaviour of the second order term, we see here that this second order term becomes the main and only term for the case of universality. This regime in particular is related to previous work on universality for the family $\mathcal{E}(m)$ of graphs with exactly $m$ edges (and no isolated vertices). 
This was first considered by Babai, Chung, Erd\H{o}s, Graham  and Spencer \cite{babai1982graphs} in 1982. They show that there exist $\mathcal{E}(m)$-universal graphs with at most $\OO\left( \frac{m^2\log\log m}{\log m}\right)$ edges. This was later improved upon by Alon and Asodi \cite{alon2002sparse} who show the upper bound of $\OO\left(\frac{m^2}{\log^2m}\right),$ which matches the lower bound. Our bound on $g(2m,m)$ recovers the result of Alon and Asodi as any graph on $m$ edges can use at most $2m$ vertices, so any graph which is $\HH(2m,m)$-universal is also $\mathcal{E}(m)$-universal. On the other hand when $m>n/2$ their results 
can not be used to obtain a bound on $g(n,m)$ for the following reason. In order to bound $g(n,m)$ one needs to find an $\HH(n,m)$-universal graph on $n$ vertices. This can never come from an $\mathcal{E}(m)$-universal graph as any such graph needs to have at least $2m>n$ vertices, in order to contain a matching of $m$ edges. Furthermore, $\mathcal{E}(m)$-universal construction of Alon and Asodi uses more than $4m$ vertices. Since any graph in $\mathcal{E}(m)$ has at most $2m$ vertices, this affords them a lot of leeway during the embedding process. In contrast, in our problem we need to find spanning universal graphs, which requires a different embedding technique. Proving such spanning results is often a harder problem.
While neither our nor the universal graphs used by Alon and Asodi are explicit, our embedding technique allows us to work with a weaker and simpler property of random graphs, which might be helpful in answering their question about finding explicit universal graphs.

\par Another related notion of universality deals with the family $\mathcal{E}(n,d)$ of graphs on $n$ vertices with degree bounded by $d$. Alon and Capalbo \cite{alon2008optimal} show that $\Theta\left(n^{2-2/d}\right)$ is the least possible number of edges in an $\mathcal{E}(n,d)$-universal graph. The $\mathcal{E}(n,d)$-universal graph they construct has more than $(1+\varepsilon)n$ vertices and it is a seemingly hard open problem in the area to obtain such a graph on exactly $n$ vertices. The best result in this direction is due to the same authors \cite{alon2007sparse} where they exhibit such a graph with  $\OO\left(n^{2-2/d}\log^{4/d}n\right)$ edges. As already mentioned we need to overcome a similar difficulty in our setting since we require our universal graphs to be of the same order as the graphs we want to embed.

\textbf{Notation.} We will abbreviate $\HH(n,m)$-universal by  $(n,m)$-universal throughout the paper. Let $G=(V,E)$ be a graph and $U\subseteq V$. We denote with $G[U]$ the subgraph of $G$ induced by $U$. We denote with $\G(n,p)$ the standard Erd\H{o}s-Renyi random graph, i.e. the probability distribution on the set of all graphs on $n$ vertices where each graph $H$ has probability measure $p^{e(H)}(1-p)^{\binom{n}{2}-e(H)}$.
We say that $\G(n,p)$ satisfies a property with high probability (whp) if a sample from $\G(n,p)$ satisfies this property with probability tending to $1$ as $n$ tends to infinity. Let $f,g$ be functions from $\mathbb{N}$ to $\mathbb{R}^+$. Then $f=\OO(g)$ if there exists a constant $C$ such that $f(n)\leq Cg(n)$ for all $n\in\mathbb{N}$. Also, $f=\Omega(g)$ if $g=\OO(f)$. Furthermore, $f=\Theta(g)$ if $g=\OO(f)$ and $g=\Omega(f)$. If $\lim_{n\xrightarrow{} \infty}\frac{f(n)}{g(n)}=0$ then we write $f=o(g)$ and $g=\omega(f)$. With $\log n$ we denote the natural logarithm of $n$. We omit floors and ceilings whenever they are not essential.



\section[Section lower bounds]{Lower bounds for $g(n,m)$}\label{lower bounds}
We start our proof of Theorem \ref{thm:main2} with the lower bounds. 

\begin{lem}\label{lem:counting}
Let $G$ be an $(n,m)$-universal graph with $t$ edges. Then the following holds:
\begin{itemize}
    \item If $m>n\log n$ then $t\geq \binom{n}{2}\left(1-\frac{n\log n}{m}\right)$.
    \item If $m\leq n\log n$ then $t\geq \frac{1}{4}\binom{m/\log m}{2}$.
\end{itemize}
\end{lem}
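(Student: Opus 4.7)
The plan is to prove the two bullets separately, both via double-counting, but with counting schemes tailored to the two regimes.

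For the first bullet ($m>n\log n$), I would count pairs $(H,\pi)$ where $H$ is a labeled graph on $[n]$ with $m$ edges and $\pi\colon [n]\to V(G)$ is a bijection with $\pi(H)\subseteq G$. By $(n,m)$-universality each of the $\binom{N}{m}$ labeled graphs $H$ (where $N=\binom{n}{2}$) admits at least one such $\pi$, giving at least $\binom{N}{m}$ pairs. On the other hand, fixing $\pi$ and counting admissible $H$'s gives $\binom{t}{m}$, so the count is at most $n!\binom{t}{m}$. Since $(N-i)/(t-i)$ is increasing in $i$ whenever $N\geq t$, each ratio is at least $N/t$, hence $\binom{N}{m}/\binom{t}{m}\geq (N/t)^m$. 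This yields $(N/t)^m\leq n!\leq n^n$, i.e.\ $N/t\leq e^{n\log n/m}$, and the inequality $e^{-x}\geq 1-x$ (valid since $n\log n/m<1$) gives $t\geq N(1-n\log n/m)$, as required.

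For the second bullet ($m\leq n\log n$) this scheme becomes useless because $n!$ dwarfs $\binom{N}{m}/\binom{t}{m}$. Instead I would set $k=\lfloor m/\log m\rfloor$ and first check $k\leq n$ by a short case split (if $m\leq n$ then $k\leq m\leq n$; if $n<m\leq n\log n$ then $\log m\geq\log n$ gives $k\leq m/\log n\leq n$). Every isomorphism class of $m$-edge graph on $[k]$ (allowing isolated vertices) extends to a graph in $\HH(n,m)$ by padding with $n-k$ isolated vertices, so $G$ must contain a representative of each. Distinct iso classes give distinct iso classes of $m$-edge subgraphs of $G$, and there are at most $\binom{t}{m}$ such subgraphs. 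Burnside-style orbit counting shows at least $\binom{\binom{k}{2}}{m}/k!$ iso classes on $[k]$, giving
$$\binom{t}{m}\geq \binom{\binom{k}{2}}{m}/k!.$$
To extract the claimed constant, I would combine the upper bound $\binom{t}{m}\leq (et/m)^m$ with a sharp Stirling-type lower bound $\binom{\binom{k}{2}}{m}\geq (e\binom{k}{2}/m)^m/\mathrm{poly}(m)$ (crucially keeping the $e^m$ factor) and $k!\leq e\sqrt{k}(k/e)^k$. Rearranging gives $t\geq\binom{k}{2}(k!)^{-1/m}(1-o(1))$; with $k=m/\log m$, a direct calculation yields $(k!)^{1/m}=e^{1+o(1)}$, so $t\geq\binom{k}{2}/e\cdot(1-o(1))\geq\binom{k}{2}/4$ for $m$ large enough.

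The main obstacle is shaving the constant down to $1/4$: using the naive $\binom{N}{m}\geq(N/m)^m$ in place of the sharp Stirling estimate would only yield $t\geq\binom{k}{2}/e^2\approx 0.135\,\binom{k}{2}$, which falls short. The $e^m$ factor from $m!\sim(m/e)^m$ must be retained, and one also needs $\binom{k}{2}\gg m$ (equivalent to $m\gg\log^2 m$, automatic in the interesting range) so that the falling product $\binom{k}{2}(\binom{k}{2}-1)\cdots(\binom{k}{2}-m+1)$ is within $e^{o(m)}$ of $\binom{k}{2}^m$. Small values of $m$ can be dismissed since $\binom{m/\log m}{2}$ is then bounded.
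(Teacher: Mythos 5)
Your proposal follows essentially the same route as the paper: double-count (isomorphism classes of) $m$-edge graphs against $m$-edge subgraphs of $G$, and for the second bullet restrict attention to graphs on $k\approx m/\log m$ vertices by padding with isolated vertices. The only difference is that where you invoke Stirling-type estimates with $(1-o(1))$ slack, the paper uses the exact inequality $\binom{b}{c}\le\binom{a}{c}(b/a)^c$ together with $k!\le k^k$ to get $t>\binom{k}{2}(k!)^{-1/m}\ge\binom{k}{2}k^{-k/m}>\binom{k}{2}/e$ cleanly, with no asymptotics to manage.
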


\begin{proof}The idea behind the proof is quite simple, we will count how many non-isomorphic graphs there are on $n$ vertices and $m$ edges and we will count how many subgraphs with $m$ edges we can find in our a graph with $t$ edges. The existence of an $(n,m)$-universal graph $G$ on $t$ edges implies that the former count must be smaller than the latter (since all $n$-vertex $m$-edge graphs must appear as subgraphs of $G$); the desired bounds then follow after some simplifications, which will differ depending on the regime.

The number of non-isomorphic graphs on $n$ vertices and $m$ edges is at least
\[
    \binom{\binom{n}{2}}{m}\frac{1}{n!},
\]
here the first term counts the number of labelled graphs on $n$ vertices and $m$ edges, and we counted each graph at most $n!$ many times. 

Since $G$ contains all these graphs, and there are $\binom{t}{m}$ subgraphs of $G$ with $m$ edges, we get
\[
       \binom{\binom{n}{2}}{m}\frac{1}{n!}\leq \binom{t}{m}.
\]
By using the inequality $\binom{b}{c}\leq \binom{a}{c}(\frac{b}{a})^c$ which holds for all $a>b>c\geq 0$ we conclude 
\[
\frac{1}{n!}\leq \left( \frac{t}{\binom{n}{2}}\right)^m
\]
which gives 

\begin{equation} \label{NEMANJAAAAAAAAA}
t > \binom{n}{2}(n!)^{-1/m} \ge \binom{n}{2}n^{-n/m}.
\end{equation}
Since $n^{-n/m}\ge 1-\frac{n\log n}{m}$ the first claim follows.
For the case when $m< n\log n$ let $k=m/\log m<n.$ So $G$ is also $(k,m)$-universal and the bound \eqref{NEMANJAAAAAAAAA} implies 
$$ t > \binom{k}{2}k^{-k/m}=\binom{k}{2}\left(\frac{m}{\log m}\right)^{-1/\log m}>\binom{k}{2}m^{-1/\log m}=\binom{k}{2}e^{-1}.$$

\vspace{-0.7cm}
\end{proof}

\section[upper bounds]{Upper bounds for $g(n,m)$}
In order to give an upper bound on $g(n,m)$ one needs to exhibit an example of an $(n,m)$-universal graph. We distinguish two regimes, corresponding to the relations between $n$ and $m$ which give us different behaviours in \Cref{thm:main2}. Namely, in the first regime we will work with $n\log n<m<n^{3/2-\varepsilon}$ and in the second with $m \le n\log n$. In both regimes we have different examples but they share some common traits. We begin with a few results which will be useful for both regimes.

\par Let us state an immediate observation which we will use frequently, without further mention. 
\begin{obs}\label{deletion}
Let $G$ be a graph with $m$ edges. If $G'$ is the subgraph of $G$ obtained by deleting $k$ vertices of highest degree, then $\Delta(G')\leq 2m/k$. In fact, any vertex of $G'$ has degree at most $2m/k$ in $G$.
\end{obs}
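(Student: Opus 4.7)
The plan is to bound $\Delta(G')$ by the $(k+1)$-th largest degree of $G$, and then use a simple averaging over the deleted vertices. First I would order the vertices of $G$ as $v_1,\ldots,v_n$ so that $d_G(v_1)\ge d_G(v_2)\ge\cdots\ge d_G(v_n)$, which makes $G'=G-\{v_1,\ldots,v_k\}$. Any surviving vertex $v$ then satisfies $d_{G'}(v)\le d_G(v)\le d_G(v_{k+1})$, so the whole task reduces to bounding $d_G(v_{k+1})$ from above by $2m/k$.

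For this, since each of the $k$ removed vertices has degree at least $d_G(v_{k+1})$ by the choice of ordering, I would write
\[
2m \;=\; \sum_{i=1}^{n} d_G(v_i) \;\ge\; \sum_{i=1}^{k} d_G(v_i) \;\ge\; k\cdot d_G(v_{k+1}),
\]
which rearranges to $d_G(v_{k+1})\le 2m/k$, as required. There is no genuine obstacle here, as the whole argument is a one-line pigeonhole on the degree sequence; the only thing worth noting is the trivial boundary case $k\ge n$, where $G'$ is empty and the statement holds vacuously.
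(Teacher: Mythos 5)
Your proof is correct and is exactly the natural one-line averaging argument; the paper itself states this as an ``immediate observation'' and gives no proof, so there is nothing to compare against beyond noting that your argument is the expected one.
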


The following lemma will allow us to pass from almost spanning universal graphs to spanning ones. Here, and in the rest of the paper when we say that we add full degree vertices to a graph we mean that we create a new graph which consists of an induced copy of the original graph and a number of new ``full degree'' vertices which are adjacent to every other vertex of the new graph.

\begin{lem}\label{lem:almost-spanning-to-spaning}
Let $n>k$ and suppose there is a $k$-vertex $(k-\floor{ \frac{k(n-k)}{2m+k}},m)$ universal graph $U$. By adding $n-k$ new full degree vertices to $U$ we obtain an $n$-vertex $(n,m)$-universal graph.
\end{lem}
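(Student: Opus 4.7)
The plan is to show that any $H\in\HH(n,m)$ embeds into $U':=U\cup\{w_1,\dots,w_{n-k}\}$ (where the $w_i$ are the newly added full-degree apex vertices) by a two-step argument: first select a set $S\subseteq V(H)$ of size $n-k$ to be sent to the apex vertices, and then invoke the $(k',m)$-universality of $U$ to accommodate the rest. Since apex vertices are adjacent to everything, all edges of $H$ with at least one endpoint in $S$ are automatically covered, so what matters is only the induced subgraph $H[T]$ on $T:=V(H)\setminus S$. If $H[T]$ has at most $k'$ non-isolated vertices, then the non-isolated part, after padding with extra isolated vertices (and, when needed, extra edges---possible in the intended regime $\binom{k'}{2}\ge m$), lies inside a graph in $\HH(k',m)$ and hence embeds into $U$; the isolated vertices of $T$ go into any leftover vertices of $U$.

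The whole lemma thus reduces to the following combinatorial claim about $H$: setting $\delta:=\lfloor k(n-k)/(2m+k)\rfloor$, $H$ contains an independent set $I$ with $|I|\ge\delta$ and $|N_H(I)|\le n-k$. Given such $I$, any $S\supseteq N_H(I)$ disjoint from $I$ with $|S|=n-k$ will work, since the vertices of $I$ are then isolated in $H[T]$ and the non-isolated part of $H[T]$ has at most $k-\delta=k'$ vertices.

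To prove the claim I would run a greedy selection with degree threshold $D\approx (2m+k)/k$. Let $B$ be the set of vertices of $H$ of degree at least $D$; by the handshake lemma $|B|\le 2m/D$. Iteratively pick some $v\in V(H)\setminus(B\cup I\cup N_H(I))$ and add it to $I$; since $\deg_H(v)<D$, each such step grows $I\cup N_H(I)$ by at most $D$ new vertices. The parameters are chosen so that $\delta D\le n-k$ and $|B|+\delta D<n$, which guarantees both that the greedy does not exhaust its pool before $|I|$ reaches $\delta$ and that $|N_H(I)|\le n-k$ at the end, completing the claim.

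The main obstacle of the argument is precisely this numerical calibration: one must match the independent-set budget $\delta$ against a degree threshold $D$ so that $\delta\cdot D\approx n-k$, which is exactly why the lemma's bound for $k'$ involves the specific quantity $k(n-k)/(2m+k)$. A cruder approach---such as simply deleting the $n-k$ highest-degree vertices of $H$ outright and bounding the number of non-isolated survivors by twice the remaining edge count---loses a factor and produces a visibly weaker value of $k'$.
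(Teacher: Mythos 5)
Your proposal is correct and uses essentially the same idea as the paper: build an independent set of low-degree vertices whose closed neighbourhood is small enough to be absorbed by the apex vertices, so that this set becomes isolated in the part handed off to $U$, which then has at most $k'$ non-isolated vertices. The paper carries out the greedy selection by repeatedly removing a minimum-degree vertex together with its neighbourhood from a shrinking graph (using that a graph on $\ge k$ vertices with $\le m$ edges has minimum degree $\le 2m/k$), whereas you use a fixed degree threshold $D$ with an explicit exclusion set $B$; this is a minor bookkeeping variation of the same argument, and both implicitly rely on the same padding observation that $(k',m)$-universality covers $k'$-vertex graphs with at most $m$ edges when $\binom{k'}{2}\ge m$.
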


\begin{proof}
Let $U$ be the universal graph given by the assumption. We construct $G$ by adding a set $F$ of $n-k$ new vertices to $U$ and joining each vertex of $F$ to all other vertices in $U \cup F$. We claim that $G$ is $(n,m)$-universal. To show this we consider an arbitrary $n$-vertex graph $H$ with $m$ edges. We prove that $H$ can be embedded into $G$. 

Before turning to the details let us explain the idea how we will achieve this. Let $\ell:=\floor{k(n-k)/(2m+k)}$ so that $U$ is $(k-\ell,m)$-universal. We will first find an independent set of $S$ of $\ell$ vertices in $H$ with the property that there are at least $k$ vertices of $H$ having no edges to $S$ (including vertices of $S$ themselves). We embed the remaining $n-k$ vertices into vertices of $G$ of full degree, we then embed the remaining $k-\ell$ vertices not in $S$ inside $U$, using the universality of $U$, finally we may embed the vertices in $S$ as we please to the remainder of $U$ since they send no edges to the part of $H$ embedded in $U$ and will be joined to every other vertex in $G$.

Let us now fill in the details. We pick a vertex $v_1$ in $H_0:=H$ of minimum degree and remove $v_1$ and all its neighbours from $H$ to obtain $H_1$. We repeat $\ell$ times, in step $i$ we pick a vertex $v_i$ in $H_{i-1}$ of minimum degree and create $H_i$ by removing $v_i$ and all its neighbours in $H_{i-1}$. By induction it is easy to see that after each step we are left with at least $k$ vertices in $H_{i}$. Indeed, if $H_{i-1}$ has at least $k$ vertices, then for $j \le i$ vertex $v_j$ has a degree of at most $2|E(H_{j-1})|/|H_{j-1}| \le 2m/k$ in $H_{j-1}$, since $v_j$ is a vertex of minimum degree in $H_{j-1}$ and $|H_{j-1}| \ge |H_{i-1}| \ge k$. So after deleting all such $v_j$'s and their neighbours, we have in total deleted at most $i\cdot (2m/k+1)\leq \ell(2m/k+1)\leq n-k$ vertices, so indeed we do have at least $k$ vertices in $H_i$ for each $i\in[\ell]$. 

In particular, we obtain a subgraph $H_\ell$ of $H$ of size at least $k$ and an independent set $S=\{v_1 \ldots, v_{\ell}\}$ of $\ell$ vertices each with no neighbours in $H_\ell$. We embed all vertices of $H \setminus (H_\ell \cup S)$ into $F$ (which we can since $|H \setminus H_\ell| \le n-k$) as well as as many vertices of $H_\ell$ as we can. If we are only left with vertices of $S$ we can embed them into $U$ (since they make an independent set). Otherwise we are left with a graph $H'$ such that $S \subseteq H'\subseteq H_\ell \cup S$. Note that $H'$ has $|G \setminus F|=k$ vertices, $\ell$ of which are independent (depicted by the first and last part of $H$ in \cref{fig:AlmostSpanning}). Since $U$ was $(k-\ell,m)$-universal we can embed $H'\setminus S$ into $U$ and place $S$ into unused vertices. This is permissible since vertices in $S$ were isolated in $H'$. Furthermore, since all other vertices got embedded into $F$ (whose vertices have full degree in $G$), we have found our embedding of $H$.
\end{proof}

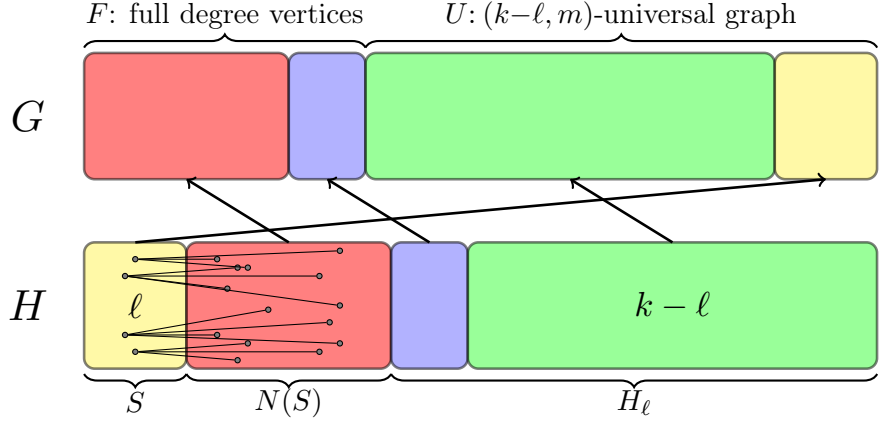
\begin{figure}
\centering
\begin{tikzpicture}[scale = 0.8,yscale=0.7, xscale=1.7]
\defPt {1}{}{a1}

\defPt {0}{0}{a1};
\defPt {2}{3}{a2};
\defPt {2.75}{0}{a3};
\defPt {6.75}{3}{a4};
\defPt {7.75}{0}{a5};

\draw[rounded corners, opacity = 0.5, fill=red!100,line width =1 pt] (a1) rectangle (a2);
\draw[rounded corners, opacity = 0.5, fill=blue!60,line width =1 pt] (a2) rectangle (a3);
\draw[rounded corners, opacity = 0.5, fill=green!80,line width =1 pt] (a3) rectangle (a4);
\draw[rounded corners, opacity = 0.5, fill=yellow!80,line width =1 pt] (a4) rectangle (a5);

\defPtm{($0.5*(a1)+0.5*(a2)$)}{b1}
\defPtm{($0.5*(a2)+0.5*(a3)$)}{b2}
\defPtm{($0.5*(a3)+0.5*(a4)$)}{b3}
\defPtm{($0.5*(a4)+0.5*(a5)$)}{b4}

\defPt {0}{-1.5}{c1}
\defPt {1}{-4.5}{c2}
\defPt {3}{-1.5}{c3}
\defPt {3.75}{-4.5}{c4}
\defPt {7.75}{-1.5}{c5}

\draw[rounded corners, opacity = 0.5, fill=yellow!80,line width =1 pt] (c1) rectangle (c2);
\draw[rounded corners, opacity = 0.5, fill=red!100,line width =1 pt] (c2) rectangle (c3);
\draw[rounded corners, opacity = 0.5, fill=blue!60,line width =1 pt] (c3) rectangle (c4);
\draw[rounded corners, opacity = 0.5, fill=green!80,line width =1 pt] (c4) rectangle (c5);

\defPtm{($0.5*(c1)+0.5*(c2)$)}{b1}
\defPtm{($0.5*(c2)+0.5*(c3)$)}{b2}
\defPtm{($0.5*(c3)+0.5*(c4)$)}{b3}
\defPtm{($0.5*(c4)+0.5*(c5)$)}{b4}

\node[scale=1.2] (b) at (b1)  {$\ell$};
\node[scale=1] (b) at (b2)  {};
\node[scale=1] (b) at (b3)  {};
\node[scale=1.2] (b) at (b4) {$k-\ell$};

\defPt{0}{1.5}{s}

\defPtm{($0.5*(c1)+0.5*(c2)+(s)$)}{p1}
\defPtm{($0.5*(c2)+0.5*(c3)+(s)$)}{p2}
\defPtm{($0.5*(c3)+0.5*(c4)+(s)$)}{p3}
\defPtm{($0.5*(c4)+0.5*(c5)+(s)$)}{p4}

\defPt{0}{-1.5}{s}
\defPtm{($0.5*(a1)+0.5*(a2)+(s)$)}{q1}
\defPtm{($0.5*(a2)+0.5*(a3)+(s)$)}{q2}
\defPtm{($0.5*(a3)+0.5*(a4)+(s)$)}{q3}
\defPtm{($0.5*(a4)+0.5*(a5)+(s)$)}{q4}


\defPt{0}{3}{s}
\defPtm{($(a1)+(s)$)}{V_11}
\defPtm{($(a3)+(s)$)}{V_12}
\defPtm{($(a3)+(s)$)}{V_21}
\defPtm{($(a5)+(s)$)}{V_22}

  \draw [thick,decorate,decoration={brace,  amplitude=5pt,raise=2pt},yshift=0pt]($(V_11)$) -- ($(V_12)$) node [black,midway,yshift=0.5cm] {$F$: full degree vertices};
   \draw [thick,decorate,decoration={brace,  amplitude=5pt,raise=2pt},yshift=0pt]($(V_21)$) -- ($(V_22)$) node [black,midway,yshift=0.5cm] {$U{:}\:(k{-}\ell,m)\text{-universal graph}$};

  \defPt{0}{-3}{s}
\defPtm{($(c5)+(s)$)}{R1}
\defPtm{($(c3)+(s)$)}{R2}

\defPtm{($(c1)+(s)$)}{S1}
\defPtm{($(c2)$)}{S2}

\defPtm{($(c2)$)}{N1}
\defPtm{($(c3)+(s)$)}{N2}

   \draw [thick,decorate,decoration={brace,  amplitude=5pt,raise=2pt},yshift=0pt]($(R1)$) -- ($(R2)$) node [black,midway,yshift=-0.45cm] {$H_\ell$};
   
      \draw [thick,decorate,decoration={brace,  amplitude=5pt,raise=2pt},yshift=0pt]($(S2)$) -- ($(S1)$) node [black,midway,yshift=-0.45cm] {$S$};
      
       \draw [ thick,decorate,decoration={brace,  amplitude=5pt,raise=2pt},yshift=0pt]($(N2)$) -- ($(N1)$) node [black,midway,yshift=-0.45cm] {$N(S)$};
      
  
   \draw[->,line width= 1 pt] (p1) -- (q4);
   \draw[->,line width= 1 pt] (p2) -- (q1);
   \draw[->,line width= 1 pt] (p3) -- (q2);
   \draw[->,line width= 1 pt] (p4) -- (q3);
   
  \node[scale=1.5] (G) at(-0.55,1.5) {$G$};
  \node[scale=1.5] (H) at(-0.55,-3) {$H$};

   \tikzstyle{every node}=[circle, draw, fill=black!50,
                        inner sep=0pt, minimum width=2pt]
   \node[] (t1) at (0.5,-4.1){};
   \node[] (n1) at (1.5,-4.3){};
   \node[] (n2) at (2.3,-4.1){};
   \node[] (n3) at (1.6,-3.9){};
   
   \node[] (t2) at (0.4,-3.7){};
   \node[] (n4) at (2.4,-3.4){};
   \node[] (n5) at (1.3,-3.7){};
   \node[] (n6) at (2.5,-3.9){};
   \node[] (n7) at (1.8,-3.1){};

   \draw[color=black] (t1) to (n1);
   \draw[color=black] (t1) to (n2);
   \draw[color=black] (t1) to (n3);
   
   \draw[color=black] (t2) to (n4);
   \draw[color=black] (t2) to (n5);
   \draw[color=black] (t2) to (n6);
   \draw[color=black] (t2) to (n7);

   
   \node[] (t1) at (0.5,-1.9){};
   \node[] (n1) at (2.5,-1.7){};
   \node[] (n2) at (1.3,-1.9){};
   \node[] (n3) at (1.6,-2.1){};
   
   \node[] (t2) at (0.4,-2.3){};
   \node[] (n4) at (1.4,-2.6){};
   \node[] (n5) at (2.3,-2.3){};
   \node[] (n6) at (1.5,-2.1){};
    \node[] (n7) at (2.5,-3){};

   \draw[color=black] (t1) to (n1);
   \draw[color=black] (t1) to (n2);
   \draw[color=black] (t1) to (n3);
   
   \draw[color=black] (t2) to (n4);
   \draw[color=black] (t2) to (n5);
   \draw[color=black] (t2) to (n6);
   \draw[color=black] (t2) to (n7);

\end{tikzpicture}

    \caption{Illustration of the final state of the embedding process when passing from almost spanning universality to spanning universality.} \label{fig:AlmostSpanning}
\end{figure}

This lemma shows that if we can establish a sufficiently strong version of almost spanning universality it only costs us a very few extra vertices of full-degree in order to obtain spanning universality. On the other hand since we want universality for $n$-vertex graphs with $m$ edges, we know that provided $m\ge n-1$ the host graph must contain some vertices of full degree, as the graphs we want to embed might also have some vertices of full degree. We note that this simple lemma is one of the main ingredients which allow us to find spanning universal graphs, which is usually a very hard problem. In addition it allows us to greatly simplify certain technical parts of our arguments. 

We will construct a part of our universal graph $G$ to satisfy a property of the following type which will allow us to embed any graph on $n$ vertices and $m$ edges into $G$.

 \begin{defn*}\label{defn:pseudorandom}
 A graph is said to have the $(r,s,t)$-domination property if for any set $R$ of size $r$ and any $t$ disjoint sets $S_1,\ldots, S_t$ of size $s$ which are disjoint from $R$, there is a vertex $v$ in $R$ and a set $S_i$ such that $v$ is a common neighbour of all vertices in $S_i$.
 \end{defn*}

A part of our universal graphs which will satisfy the above property is going to be provided by a random graph. Towards this end the following lemma establishes a condition on the parameters $r,s,t$ which implies that the random graph $\G(n,p)$ has the $(r,s,t)$-domination property.

\begin{lem}\label{lem:random-is-pseudorandom}
 The random graph $\G(n,p)$ has the $(r,s,t)$-domination property whp provided
 $$3 \log n   \le p^s\cdot  \min\left(\frac{r}{s},t\right).$$
\end{lem}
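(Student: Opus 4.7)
The plan is to prove the statement by a direct first-moment / union-bound argument. First I would fix a set $R$ of size $r$ together with pairwise disjoint sets $S_1,\ldots,S_t$ of size $s$ that are also disjoint from $R$. For each pair $(v,i)$ with $v\in R$ and $i\in[t]$, let $A_{v,i}$ denote the event that $v$ is adjacent in $\mathcal{G}(n,p)$ to every vertex of $S_i$; clearly $\Pr[A_{v,i}]=p^s$.

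The key observation is that these $rt$ events $A_{v,i}$ are mutually independent. Indeed, $A_{v,i}$ depends only on the $s$ potential edges between $v$ and $S_i$; since the sets $S_1,\ldots,S_t$ are pairwise disjoint and disjoint from $R$, varying either $v\in R$ or $i\in[t]$ changes at least one endpoint of every edge involved, so the $rt$ edge sets are pairwise disjoint. Hence
$$\Pr\Bigl[\bigcap_{v\in R,\,i\in[t]} \overline{A_{v,i}}\Bigr] = (1-p^s)^{rt} \le \exp(-p^s rt).$$

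Next I would union bound over the configurations. The number of choices for $R$ is at most $\binom{n}{r}\le n^r$, and the number of ordered tuples $(S_1,\ldots,S_t)$ of pairwise disjoint $s$-sets is at most $\binom{n}{s}^t\le n^{st}$. So the probability that $\mathcal{G}(n,p)$ fails the $(r,s,t)$-domination property is at most
$$n^{r+st}\exp(-p^s rt) = \exp\bigl((r+st)\log n - p^s rt\bigr).$$
The hypothesis $3\log n\le p^s\min(r/s,t)$ yields both $p^s rt\ge 3st\log n$ and $p^s rt\ge 3r\log n$; averaging these gives $p^s rt \ge \tfrac{3}{2}(r+st)\log n$. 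Substituting back bounds the failure probability by $\exp\bigl(-\tfrac{1}{2}(r+st)\log n\bigr) = o(1)$, as required.

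I do not expect any real obstacle: the whole argument is a clean union bound. The only subtlety worth emphasizing is verifying the independence of the $rt$ events $A_{v,i}$, which depends crucially on the assumed disjointness of the $S_i$ from one another and from $R$; this is precisely why those disjointness hypotheses appear in the definition of the $(r,s,t)$-domination property in the first place.
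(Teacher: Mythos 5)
Your proof is correct and follows essentially the same approach as the paper's: the same independence observation (the events $A_{v,i}$ involve pairwise disjoint edge sets thanks to the disjointness of $R$ and the $S_i$'s), followed by the same union bound $\binom{n}{r}\binom{n}{s}^t(1-p^s)^{rt}$. The only cosmetic difference is in the final arithmetic --- you average the two consequences of the hypothesis to get $p^s rt \ge \tfrac32(r+st)\log n$, while the paper bounds the exponent directly by $-r\log n$; both steps are valid and give the same conclusion.
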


\begin{proof}
The probability that a fixed vertex is a common neighbour of every vertex in a fixed set $S$ of size $s$ is $p^s$. By independence, the probability that a fixed set $R$ of size $r$ and $t$ fixed disjoint sets $S_1,\ldots, S_t$ of size $s$ are such that no vertex in $R$ is a common neighbour of every vertex in some $S_i$ is $(1-p^s)^{tr}$. So, by a union bound, the probability that some choice of sets $R,S_1,\ldots,S_t$ fails the desired condition is at most
$$\binom{n}{r}\binom{n}{s}^t(1-p^s)^{tr} \le n^{r + ts} e^{-p^str}=
 n^{r + ts -p^str/\log n} \le  n^{-r} = o(1). $$
\end{proof}

\subsection{The first regime}\label{The first regime}
Let us now turn to the first regime, so when $n\log n<m<n^{3/2-\varepsilon}.$ As part of our construction of an $(n,m)$-universal graph with $n$ vertices we first find a smaller almost spanning universal graph, in particular a graph on $k<n$ vertices which is $(k-r,m)$-universal for appropriately chosen parameters $k$ and $r$. We then finish the construction by applying \Cref{lem:almost-spanning-to-spaning}.

Let $\eps>0$ and $k,m$ be integers such that $k \log k  < m < 3k^{3/2-\eps}$ and $k$ is sufficiently large depending only on $\eps$.
We let $G$ be a $k$-vertex graph consisting of 
\begin{itemize}
    \item a set $F$ of $k/2$ vertices joined to every vertex and
    \item a set $V$ of $k/2$ vertices inducing a graph $G'$ with the $(r,s,t)$-domination property missing at least $\frac{\eps k^3 \log k}{2^8m}$ edges
\end{itemize}  where $t=\frac{k^3}{2^{9}m^2}$, $s=\frac{8m}{k}$ and  $r=\frac{k^2}{4m}$. In order to ensure we can find such a graph $G'=G[V]$ we let $p=1-\frac{\eps k \log k}{16m}>\frac12$. Then we have $$p^s=(1-(1-p))^s \ge e^{-2(1-p)s}=k^{-\eps},$$ where we used that $1-x \geq e^{-2x}$,  provided $0\le x<1/2$. In addition, since $m<3k^{3/2-\eps}$, we also have
$$\min\left(\frac{r}{s},t\right)=t=k^3/(2^{9}m^2)\ge k^{2\eps}/(3^2 2^{9})> 3 k^{\eps}\log k,$$ as $k$ is large enough compared to $\eps$. So, by Lemma \ref{lem:random-is-pseudorandom} $\G(k/2,p)$ has the $(r,s,t)$-domination property whp. Notice further that $\G(k/2,p)$ is also missing at least $(1-p)k^2/16=\frac{\eps k^3 \log k}{2^8m}$ edges whp. So in particular it provides us with our $G'$. We now show $G$ is universal for almost spanning graphs with $m$ edges.

\begin{claim*}
$G$ is $\left(k-r,m\right)$-universal. 
 \end{claim*}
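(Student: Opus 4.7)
The plan is to split an arbitrary $H$ on $k-r$ vertices with $m$ edges into a ``high-degree'' piece placed in $F$ and a ``low-degree'' piece embedded into $V$ via the $(r,s,t)$-domination property of $G'$. Concretely, I would first let $F_H$ be the set of $k/2$ vertices of $H$ of largest degree and set $H' := H - F_H$. By \Cref{deletion}, $H'$ has $k/2 - r$ vertices and $\Delta(H') \le 2m/(k/2) = 4m/k = s/2$. Embedding $F_H$ arbitrarily (injectively) into $F$ is automatic: every vertex of $F$ has full degree in $G$, so all edges of $H$ incident to $F_H$ are covered regardless of where the other endpoints go. The entire task thus reduces to embedding $H'$ into $V$, where the gap $|V| - |H'| = r$ provides a crucial amount of slack.

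For the $H' \to V$ step I would run a deferred greedy embedding. Process the vertices of $H'$ one by one in some order, maintaining a partial injection $\phi$ and a set $D$ of deferred vertices. For the current $u$, try to embed it into an unused $w \in V$ adjacent in $G'$ to $\phi(N^-(u))$, where $N^-(u)$ denotes the set of already-embedded neighbors of $u$ in $H'$; if this fails, add $u$ to $D$. Throughout the process the set $R$ of unused vertices in $V$ has size at least $r$---in fact $|R| \ge r + |D|$ since $|V| - |H'| = r$---which is exactly what the domination property needs. Whenever we can extract $t$ vertices $u_{j_1},\dots,u_{j_t}$ from $D$ whose $H'$-neighborhoods are pairwise disjoint, I would pad each $\phi(N^-(u_{j_i}))$ to a set $S_i$ of size $s$ using other already-embedded vertices, chosen so that $S_1,\dots,S_t$ are themselves pairwise disjoint (and disjoint from $R$); this is always possible because $|N^-(u)| \le s/2$ and $t s$ is small relative to $|\phi(H'_{\mathrm{emb}})|$. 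The $(r,s,t)$-domination property then hands us some $v \in R$ adjacent to all of some $S_i \supseteq \phi(N^-(u_{j_i}))$, which we use to embed $u_{j_i}$ and remove it from $D$. Since $\Delta(H') \le 4m/k$, a single pick from $D$ blocks at most $\Delta(H')^2$ other candidates in the disjointness greedy, so $t$ disjoint-neighborhood deferred vertices can be found as soon as $|D| \gtrsim t\,\Delta(H')^2 = k/32$, which keeps $|D|$ bounded throughout.

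The most delicate point, and the one I expect to work hardest on, is the end-game: after every vertex of $H'$ has been processed, we may still be stuck with a residual of up to roughly $k/32$ deferred vertices---too few to trigger domination by the disjoint-neighborhood extraction alone. To finish them I would exploit the remaining $r + |D|$ unused vertices in $V$ together with the fact that $G'$ is missing only about $\frac{\varepsilon k^3 \log k}{2^8 m}$ edges, so that common neighborhoods of small sets are typically plentiful---either by invoking the domination property with ``virtual'' $s$-sets drawn from $\phi(H'_{\mathrm{emb}})$ to top up the count to $t$, or by a direct density-based greedy extension that uses how few non-edges $G'$ has. Balancing the parameters $r$, $s$, $t$ and the missing-edge count so that this end-game closes without any vertex left over is what I expect to be the real content of the argument.
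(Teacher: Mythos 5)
Your proposal has a genuine gap exactly where you flag it: the end-game. The core problem is that you commit all $k/2$ vertices of $F$ to high-degree vertices of $H$ up front, which forces you to embed \emph{every} one of the $k/2-r$ remaining vertices of $H'$ into $V$. Your deferred-greedy process needs a pool of roughly $t\,\Delta(H')^2 \approx k/32$ candidate vertices with pairwise-disjoint neighborhoods to trigger the $(r,s,t)$-domination property, so once the unembedded portion of $H'$ drops below that threshold the property gives you nothing, and you are left with up to $\Theta(k)$ stragglers and no mechanism to place them. Your two suggested escapes --- padding $D$ with ``virtual'' sets, or exploiting the small number of non-edges of $G'$ --- are not worked out, and the second one in particular is misguided: $G'$ is dense, but for $m$ near $k\log k$ a set of $s = 8m/k = \Theta(\log k)$ random vertices has common neighborhood of expected size only $k p^s / 2 = \Theta(k^{1-\eps})$, which is far from ``plentiful'' relative to $k$ and does not obviously let a naive greedy finish $\Theta(k)$ leftovers.

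The paper avoids this issue entirely with a simple but crucial tweak you missed: it embeds only $k/4$ high-degree vertices of $H$ into $F$, deliberately reserving the other $k/4$ vertices of $F$ as a catch-all for the tail of the process. After this deletion $H'$ has $3k/4 - r$ vertices with $\Delta(H') \le 8m/k = s$. One then embeds vertices of $H'$ into $G'$ one at a time via the domination property, stopping as soon as $k/2 - r$ of them are placed. The invariant that makes this work is that as long as fewer than $k/2 - r$ vertices of $H'$ are embedded, the set $H' \setminus S$ of not-yet-processed vertices still has size at least $|H'| - (k/2-r) = k/4$, which is plenty to extract $t = k^3/(2^9 m^2)$ disjoint sets $S_v$ of size at most $s$; simultaneously, $G'$ still has at least $r$ free vertices. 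When the process stops, the remaining $k/4$ vertices of $H'$ go straight into the reserved half of $F$ --- no end-game argument required. So there is no need for a deferred pool $D$ or a re-attempt mechanism: the reserve in $F$ converts the hard residual into a triviality. This is the missing idea in your write-up.
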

\vspace{-0.5cm} 
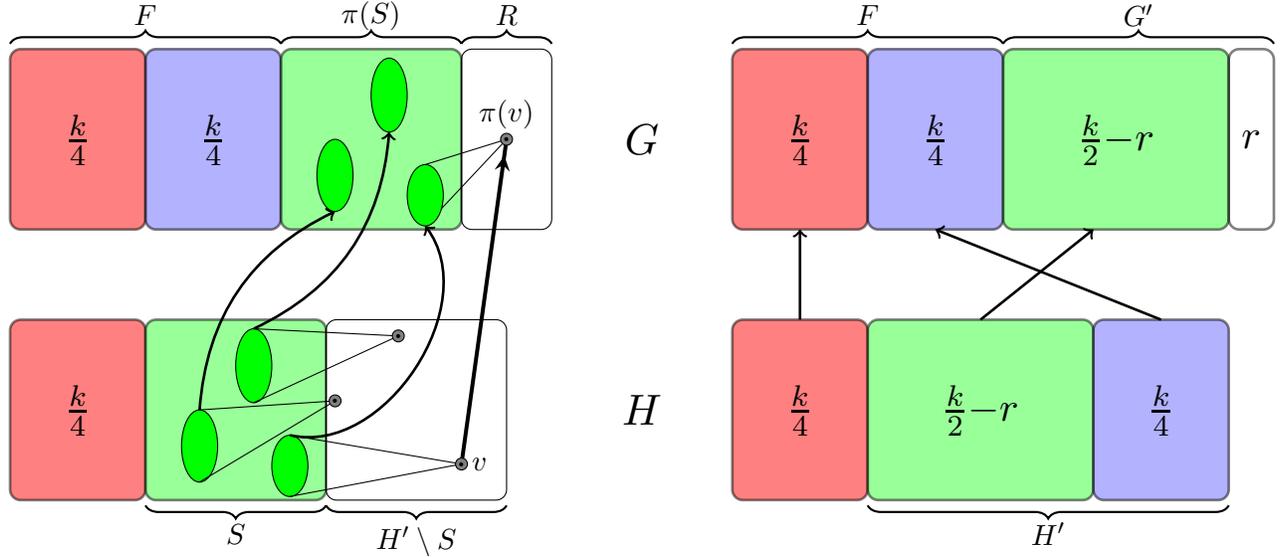
\begin{figure}
\centering
\begin{tikzpicture}[scale = 2, xscale=1.2, yscale=1.2]

\draw[rounded corners, opacity = 0.5, fill=red!100,line width =1 pt] (-1.5,0) rectangle (-0.75,1);
\draw[rounded corners, opacity = 0.5, fill=blue!60,line width =1 pt] (-0.75,0) rectangle (-0,1);
\draw[rounded corners, opacity = 0.5, fill=green!80,line width =1 pt] (0,0) rectangle (1.25,1);
\draw[rounded corners, opacity = 0.5, line width =1 pt] (1.25,0) rectangle (1.5,1);
\node[scale=1.4] (a) at  (-1.125,0.5)  {$\frac{k}{4}$};
\node[scale=1.4] (b) at  (-1.125+0.75,0.5)  {$\frac{k}{4}$};
\node[scale=1.4] (c) at  (1.25/2,0.5)  {$\frac{k}{2}{-}r$};
\node[scale=1.4] (d) at (1.5-0.125,0.5){$r$};

\draw[rounded corners, opacity = 0.5, fill=red!100,line width =1 pt] (-1.5,-1.5) rectangle (-0.75,-0.5);
\draw[rounded corners, opacity = 0.5, fill=green!80,line width =1 pt] (-0.75,-1.5) rectangle (-0.75+1.25,-0.5);
\draw[rounded corners, opacity = 0.5, fill=blue!60,line width =1 pt] (0.5,-1.5) rectangle (1.25,-0.5);
\node[scale=1.4] (a) at  (-1.125,-1)  {$\frac{k}{4}$};
\node[scale=1.4] (b) at  (-0.75+1.25/2,-1)  {$\frac{k}{2}{-}r$};
\node[scale=1.4] (c) at  (-0.75+1.25+0.375,-1)  {$\frac{k}{4}$};

\defPt{-1.5+0.375}{-0.5}{r1};
\defPt{-1.5+0.375}{0}{r2};

\defPt{1.5-0.25-0.375}{-0.5}{b1};
\defPt{-0.75+0.375}{0}{b2};

\defPt{-0.75+1.25/2}{-0.5}{g1};
\defPt{1.25-0.75}{0}{g2};

\defPt{-1.5}{1}{F1};
\defPt{0}{1}{F2};

\defPt{0}{1}{G'1};
\defPt{1.5}{1}{G'2};

\defPt{1.25}{-1.5}{H'1};
\defPt{-0.75}{-1.5}{H'2};
 \draw [thick,decorate,decoration={brace,  amplitude=5pt,raise=2pt},yshift=0pt]($(F1)$) -- ($(F2)$) node [black,midway,yshift=0.45cm] {$F$};
 
  \draw [thick,decorate,decoration={brace,  amplitude=5pt,raise=2pt},yshift=0pt]($(G'1)$) -- ($(G'2)$) node [black,midway,yshift=0.45cm] {$G'$};
  
  \draw [thick,decorate,decoration={brace,  amplitude=5pt,raise=2pt},yshift=0pt]($(H'1)$) -- ($(H'2)$) node [black,midway,yshift=-0.45cm] {$H'$};
  \node[scale=1.5] (G) at(-2,0.5) {$G$};
  \node[scale=1.5] (H) at(-2,-1) {$H$};
  
  \draw[->,line width= 1 pt] (r1) -- (r2);
  \draw[->,line width= 1 pt] (b1) -- (b2);
  \draw[->,line width= 1 pt] (g1) -- (g2);
  
\defPt{-1.5}{0}{a1}
\defPt{-0.75}{1}{a2}
\defPt{0}{0}{a3}
\defPt{1}{1}{a4}
\defPt{1.5}{0}{a5}
\defPt{-4}{0}{s}
\foreach \i in {1,...,5}
{
\defPtm{($(a\i)+(s)$)}{a\i};
};

\draw[rounded corners, opacity = 0.5, fill=red!100,line width =1 pt] (a1) rectangle (a2);
\draw[rounded corners, opacity = 0.5, fill=blue!60,line width =1 pt] (a2) rectangle (a3);
\draw[rounded corners, opacity = 0.5, fill=green!80,line width =1 pt](a3) rectangle (a4);
\draw[rounded corners] (a4) rectangle (a5);

\defPt{-1.5}{-1.5}{b1}
\defPt{-0.75}{-0.5}{b2}
\defPt{0.25}{-1.5}{b3}
\defPt{1.25}{-0.5}{b4}
\foreach \i in {1,...,4}
{
\defPtm{($(b\i)+(s)$)}{b\i};
};

\draw[rounded corners, opacity = 0.5, fill=red!100,line width =1 pt] (b1) rectangle (b2);
\draw[rounded corners, opacity = 0.5, fill=green!80,line width =1 pt] (b2) rectangle (b3);
\draw[rounded corners] (b3) rectangle (b4);

\defPtm{($0.5*(b2)+0.5*(b3)$)}{e1}
\defPt{0.1}{0}{r}
\defPt{0}{-0.1}{d}
\defPt{0}{0.1}{u}
\defPt{-0.1}{0}{l}

\defPtm{($(e1)+2*(l)+4*(d)$)}{t1}
\defPtm{($(e1)+2*(l)+0*(u)$)}{t2}

\defPtm{($(e1)+1*(r)+4.5*(u)$)}{t3}
\defPtm{($(e1)+1*(r)+0.4*(u)$)}{t4}

\defPtm{($(e1)+3*(r)-4.8*(u)$)}{t5}
\defPtm{($(e1)+3*(r)-1.4*(u)$)}{t6}

\fitellipsis{t1}{t2}
\fitellipsis{t3}{t4}
\fitellipsis{t5}{t6}

\defPt{0}{1}{s}
\defPtm{($(a3)-(b2)+(s)$)}{trans}
\foreach \i in {1,...,6}
{
\defPtm{($(t\i)+(trans)$)}{g\i}
}

  \defPt{-3.35}{-0.59}{p1}
    \defPt{-3.7}{-0.95}{p2}
      \defPt{-3}{-1.3}{p3}

     \draw[color=black] (p2) to (t1);
     \draw[color=black] (p2) to (t2);
     
       \draw[color=black] (p1) to (t3);
     \draw[color=black] (p1) to (t4);
     
       \draw[color=black] (p3) to (t5);
     \draw[color=black] (p3) to (t6);

     \draw[->,line width= 1 pt] (t2) to [bend left=30](g1);
     \draw[->,line width= 1 pt] (t3) to [bend right=30](g4);
    \draw[->,line width= 1 pt] (t6) to [bend right=70](g5);
  
  \defPt{0}{-1}{s}
   \defPtm{($(b2)+(s)$)}{B2}
   \draw [thick,decorate,decoration={brace,  amplitude=5pt,raise=2pt},yshift=0pt]($(b3)$) -- ($(B2)$) node [black,midway,yshift=-0.45cm] {$S$};
   
     \defPt{0}{1}{s}
   \defPtm{($(a3)+(s)$)}{A3}
   \draw [thick,decorate,decoration={brace,  amplitude=5pt,raise=2pt},yshift=0pt]($(A3)$) -- ($(a4)$) node [black,midway,yshift=0.45cm] {$\pi(S)$};
   
      \defPtm{($(b4)-(s)$)}{B4}
      \draw [thick,decorate,decoration={brace,  amplitude=5pt,raise=2pt},yshift=0pt]($(B4)$) -- ($(b3)$) node [black,midway,yshift=-0.55cm] {$H'\setminus S$};
      
      \defPtm{($(a1)+(s)$)}{A1}
      \defPtm{($(a3)+(s)$)}{A3}
      \draw [thick,decorate,decoration={brace,  amplitude=5pt,raise=2pt},yshift=0pt]($(A1)$) -- ($(A3)$) node [black,midway,yshift=+0.45cm] {$F$};

      \defPtm{($(a5)+(s)$)}{A5}
      \draw [thick,decorate,decoration={brace,  amplitude=5pt,raise=2pt},yshift=0pt]($(a4)$) -- ($(A5)$) node [black,midway,yshift=+0.45cm] {$R$};

     \defPtm{($0.5*(a4)+0.5*(a5)$)}{p}
     \draw[diredge, line width= 1.5 pt] (p3) to (p);
    \node[scale=1] (c) at  ([yshift=0.14cm]p)  {$\pi(v)$};
    \node[scale=1] (c) at  ([xshift=0.1cm]p3)  {$v$};
   
     \draw[color=black] (p) to (g5);
     \draw[color=black] (p) to (g6);

     \defPtm{($0.5*(a1)+0.5*(a2)$)}{k1}
     \defPtm{($0.5*(a2)+0.5*(a3)$)}{k2}
     \defPtm{($0.5*(b1)+0.5*(b2)$)}{k3}
     
      \node[scale=1.4] (c) at  (k1)  {$\frac{k}{4}$};
     \node[scale=1.4] (c) at  (k2)  {$\frac{k}{4}$};
     \node[scale=1.4] (c) at  (k3)  {$\frac{k}{4}$};

     \tikzstyle{every node}=[circle, draw, fill=black!50,
                        inner sep=0pt, minimum width=2pt]
 \node[scale=1.4] (c) at  (p1)  {.};
 \node[scale=1.4] (c) at  (p2)  {.};
 \node[scale=1.4] (c) at  (p3)  {.}; 
 \node[scale=1.4] (c) at  (p)   {.};

\fitellipsis{g1}{g2}
\fitellipsis{g3}{g4}
\fitellipsis{g5}{g6}
  
\end{tikzpicture}

\caption{The picture on the left illustrates the embedding process while the one on the right illustrates the final state of the embedding.} \label{fig:R1}
\end{figure}

\begin{proof}

 Let $H$ be a graph on $k-r$ vertices and $m$ edges. Our goal is to embed $H$ into $G$. 
 Let us first delete $k/4$ vertices of largest degrees from $H$ to obtain $H'\subseteq H$ with $\Delta(H') \le 8m/k$ and $|H'| = 3k/4-r$. The deleted $k/4$ vertices we embed into $F$, leaving us with $k/4$ remaining vertices in $F$. 

We continue by embedding vertices from $H'$ one by one into $G'$, making sure that whenever two embedded vertices make an edge in $H'$, their images also make an edge in $G'$. Let $S$ be the set of already embedded vertices from $H'$, i.e.\ assume that we have already found a distinct vertex $\pi(v)\in V(G')$ for every $v \in S \subseteq V(H')$. If $|S| \ge k/2-r$ we stop, so let us assume that $|S| < k/2-r$, which implies that we have at least $k/4$ vertices in $H'\setminus S$. For each $v\in H'\setminus S$, let $S_v=\pi(S \cap N_{H'}(v))$, so the set of vertices of $G'$ which were already assigned to a neighbour of $v$ (see \cref{fig:R1}). We know that $|S_v| \le 8m/k=s$ since $\Delta(H') \le 8m/k$. Furthermore, the same max degree condition tells us that each $S_v$ can intersect at most $(8m/k)^2$ other such sets (since $S_v$ has size at most $8m/k$ and every vertex of $S_v$ can belong to at most $8m/k$ other $S_u$'s). This implies the existence of a family of at least $\frac{|H'\setminus S|}{(8m/k)^2+1}\geq \frac{k^3}{2^{9}m^2}=t$ disjoint sets $S_v$. We can find such a family by greedily choosing the $S_v$'s.
 
On the other hand we have a set $R$ of at least $|G'|-|S|\geq r$ yet unassigned vertices of $G'$. Since $G'$ is $(r,s,t)$-dominating we know that there is a vertex $u$ in $R$ and one of the sets $S_v$ such that $u$ is a common neighbour of every vertex in $S_v$. So we may embed $\pi(v)=u$ and repeat.
 
When we stop (i.e. when $|S|\geq k/2-r$), we will have embedded at least $k/2-r$ vertices of $H'$. We now embed the remaining $k/4$ vertices of $H'$ to the remaining $k/4$ vertices of $F$ to obtain the desired copy of $H'$ in $G'.$ 
\end{proof}

\begin{thm}\label{thm:upper-bound-big}
Let $\eps>0$ and $n \log n < m < n^{3/2-\eps}$. There is an $n$-vertex graph missing at least $\frac{\eps n^3 \log n}{2^{12}m}$ edges which is $(n,m)$-universal, provided $n$ is large enough depending only on $\eps$.
\end{thm}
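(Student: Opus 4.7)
The plan is to apply the preceding claim with an appropriate choice of $k$ to produce an almost-spanning universal graph on $k$ vertices, and then to invoke \Cref{lem:almost-spanning-to-spaning} to pass to a spanning $n$-vertex $(n,m)$-universal graph by adjoining $n-k$ new full-degree vertices. The natural choice is $k=n/2$ (ignoring rounding): it is small enough that the ``degree budget'' $r''=\lfloor k(n-k)/(2m+k)\rfloor\approx n^2/(8m)$ afforded by \Cref{lem:almost-spanning-to-spaning} comfortably exceeds the $r=k^2/(4m)=n^2/(16m)$ demanded by the preceding claim, and large enough that passing from $k$ to $n$ only costs a constant factor in the missing-edge count.

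First I would verify the hypotheses $k\log k<m<3k^{3/2-\eps}$ of the preceding claim for $k=n/2$. The lower bound is immediate from $m>n\log n$, and the upper bound follows from $m<n^{3/2-\eps}$ together with the numerical inequality $3/2^{3/2-\eps}>1$ (which holds since $2^{3/2}<3$). The claim then supplies a $k$-vertex graph $G$ consisting of a full-degree half $F$ and a half $V$ inducing $G'$ with the $(r,s,t)$-domination property, such that $G$ is $(k-r,m)$-universal and $G'$ is missing at least $\frac{\eps k^3\log k}{2^8 m}$ edges. I would then observe that any $(a,m)$-universal graph is automatically $(a',m)$-universal for every $a'\le a$, since any $a'$-vertex $m$-edge graph can be padded with isolated vertices to an $a$-vertex $m$-edge graph; in particular, because $r\le r''$ by the comparison above, $G$ is $(k-r'',m)$-universal, and \Cref{lem:almost-spanning-to-spaning} applied with this $G$ produces an $n$-vertex $(n,m)$-universal graph $G^*$.

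To finish I would count the missing edges of $G^*$: both the $n-k$ vertices appended by \Cref{lem:almost-spanning-to-spaning} and the vertices of $F$ have full degree, so the missing edges of $G^*$ are exactly those of $G'$. With $k=n/2$ this yields at least
\[\frac{\eps k^3\log k}{2^8 m}=\frac{\eps n^3\log(n/2)}{2^{11}m}\ge\frac{\eps n^3\log n}{2^{12}m}\]
missing edges for $n$ large enough (using $\log(n/2)\ge\tfrac{1}{2}\log n$). There is no serious obstacle: all the real work has already been performed in the claim and in \Cref{lem:almost-spanning-to-spaning}, and the only care needed is to pick $k$ as a constant fraction of $n$ so that the several inequalities ($r\le r''$, the two hypotheses of the claim, and the missing-edge bound) all balance with constant room to spare.
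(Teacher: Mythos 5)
Your proposal is correct and follows essentially the same route as the paper: both set $k=n/2$, invoke the preceding claim to get a $(k-r,m)$-universal $k$-vertex graph missing at least $\frac{\eps k^3\log k}{2^8m}$ edges, check $r=k^2/(4m)\le\lfloor k(n-k)/(2m+k)\rfloor$, and apply \Cref{lem:almost-spanning-to-spaning}. The only difference is that you make explicit the monotonicity fact that $(a,m)$-universality implies $(a',m)$-universality for $a'\le a$, which the paper uses implicitly.
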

\begin{proof}
Let $k=n/2$, so $k \log k < m < 3k^{3/2-\eps}$. By the previous claim $G$ is a $(k-r,m)$-universal graph which misses at least $\frac{\eps k^3 \log k}{2^8m}\ge \frac{\eps n^3 \log n}{2^{12}m}$ edges. Since 
$r= \frac{k^2}{4m} \le \floor{\frac{k(n-k)}{2m+k}}$ Lemma \ref{lem:almost-spanning-to-spaning} implies that adding $n-k=n/2$ vertices of full degree to $G$ gives us the desired graph.
\end{proof}

Note that our final example for above theorem consisted of $3n/4$ vertices of full degree and $n/4$ vertices spanning a graph satisfying certain properties which hold for the random graph $\G(n/4,p)$ whp. Upon taking complements, what we have shown is that whp the random graph $\G(n/4,q)$, where $q=1-p,$ has Tur\'an number on $n$ vertices of at most $\binom{n}{2}-m=\binom{n}{2}-\frac{\eps n \log \frac{n}{2}}{32q}=(1-\Theta(\frac{\log n}{nq}))\binom{n}{2}$, where we may choose an arbitrary $q$ such that $1/2>q>n^{-1/2+\eps}$. Since in this regime $\chi(\G(n,q))= \Theta (nq/\log n)$ this is approximately what we would obtain by applying Erd\H{o}s-Stone-Simonovits theorem, despite the fact that our graph is of linear order while the usual Erd\H{o}s-Stone-Simonovits applies only to small forbidden graphs. We postpone further discussion to concluding remarks.

\subsection{The second regime}

In this subsection we deal with the case when $n/2<m<n\log n$. In this regime our goal is to construct an $(n,m)$-universal graph on $\Theta\left(\frac{m^2}{\log^2m}\right)$ edges to show the desired upper bound.

A part of our construction will again use a graph satisfying an appropriate domination property with few edges, similarly as in the first regime. We however require slightly different relation between parameters. The next lemma shows that the random graph still provides us with such a graph.

\begin{lem}\label{Gnp}
 $\G(n,p)$ whp has the $(n^{\frac34},\frac{\log n}{2\log\frac{1}{p}},n^{\frac34})$-domination property for $p<\frac12$.
\end{lem}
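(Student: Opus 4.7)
The plan is to invoke Lemma \ref{lem:random-is-pseudorandom} directly with $r=t=n^{3/4}$ and $s=\lceil\log n/(2\log(1/p))\rceil$. The bulk of the work is just a short calculation verifying the hypothesis of that lemma.

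First I would estimate $p^s$. By the choice of $s$, ignoring the ceiling, $p^s = e^{s\log p} = e^{-\log n/2}=n^{-1/2}$, so in general $p^s \ge p \cdot n^{-1/2}$. (For the argument below it is cleanest to note that, since $p<1/2$, we have $\log(1/p)>\log 2$, and so $s\le \log n/(2\log 2)+1$; thus $s$ is a positive integer of size $O(\log n)$, and in particular $s\ge 1$.) Consequently
\[
\min\!\left(\tfrac{r}{s},t\right)=\tfrac{n^{3/4}}{s},
\]
since $s\ge 1$.

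Next I would plug these estimates into the sufficient condition of Lemma \ref{lem:random-is-pseudorandom}. We need $3\log n \le p^s\cdot \min(r/s,t)$, which becomes (up to the ceiling loss, absorbed into constants)
\[
3\log n \;\le\; n^{-1/2}\cdot \tfrac{n^{3/4}}{s} \;=\; \tfrac{n^{1/4}}{s}.
\]
Since $s=O(\log n)$, the right hand side is $\Omega(n^{1/4}/\log n)$, which dominates $3\log n$ for all sufficiently large $n$. Therefore the hypothesis of Lemma \ref{lem:random-is-pseudorandom} holds, and the conclusion that $\mathcal{G}(n,p)$ has the $(n^{3/4},s,n^{3/4})$-domination property whp follows immediately.

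I don't anticipate any substantial obstacle; the only mildly delicate point is making sure the ceiling used to turn $s$ into an integer does not harm the estimate on $p^s$, but since $p<1/2$ this rounding costs at most an extra factor of $p>0$ in $p^s$, which is harmless given the very comfortable gap between $n^{1/4}/\log n$ and $3\log n$.
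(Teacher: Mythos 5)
Your proof is correct and follows the same route as the paper: both directly apply Lemma~\ref{lem:random-is-pseudorandom} with $r=t=n^{3/4}$ and $s=\frac{\log n}{2\log(1/p)}$, compute $p^s=n^{-1/2}$, observe $\min(r/s,t)=n^{3/4}/s$, and finish by noting $s=O(\log n)$. Your extra care with the ceiling on $s$ is fine but not needed, since the paper works with $s$ as defined and the slack is ample.
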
{}

\begin{proof}
 Using \Cref{lem:random-is-pseudorandom} and setting $r=n^{\frac34}$, $s=\frac{\log n}{2\log\frac{1}{p}}$ and $t=n^{\frac34}$ , it is enough to show that $3 \log n   \le p^s\cdot  \min\left(\frac{r}{s},t\right)$, which is equivalent to
 $3\log n\leq \frac{1}{\sqrt{n}}\cdot \frac{n^{\frac34}}{s}$
 and since $s\leq \log n$, we are done.
\end{proof}

To upper bound $g(n,m)$ we will give a recursive construction of an $(n,m)$-universal graph. It will not be hard to ``extract'' the constructed graph later, but we use the recursive definition as it provides us with a convenient way of controlling the bounds. 

\begin{lem}\label{lem:recursive step}
For $n/2 \le m \le \frac{n \log n}{2^{10}}$ we have $$g(n,m) \le \frac{32m^3}{n \log ^3 n}+g\left(n' , m\right)$$ where $n'=32m \cdot \log(\frac{n\log n}{m})/\log n$.
\end{lem}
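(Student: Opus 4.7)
The plan is to use Lemma~\ref{lem:almost-spanning-to-spaning} to reduce to an almost-spanning universal problem---which gives us crucial slack---and then construct the almost-spanning universal graph on $k$ vertices by overlaying a sparse random graph (to handle the low-degree part of any embedded graph) with a small minimum $(n',m)$-universal graph $U_B$ on a subset $B$ of size $n'$ (to handle the high-degree part).

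I would set $p=32m^3/(n^3\log^3 n)$, $\Delta:=2m/n'$, and pick $f$ so that, with $k:=n-f$, the slack $\ell:=\lfloor kf/(2m+k)\rfloor$ is comfortably larger than $k^{3/4}(\Delta^2+1)$. Using $m\ge n/2$ and the fact that $\Delta=O(\log n)$, one may take $f=O(n^{3/4}\log^2 n)$, which for $n$ sufficiently large gives $fn\le 16m^3/(n\log^3 n)$. The construction has $V(G)=F\cup A\cup B$ with $|F|=f$, $|B|=n'$, $|A|=k-n'$; every vertex of $F$ is fully adjacent to the rest of $G$, $G[B]$ is a minimum $(n',m)$-universal graph, and $A\cup B$ carries an independent random $\G(k,p)$. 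The total edge count is at most $fn+\binom{k}{2}p+g(n',m)\le 32m^3/(n\log^3 n)+g(n',m)$, matching the desired bound.

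By Lemma~\ref{lem:almost-spanning-to-spaning} it suffices to show that $G[A\cup B]$ is $(k-\ell,m)$-universal. Given a $(k-\ell)$-vertex graph $\tilde H$ with $m$ edges, let $D$ be its $n'$ highest-degree vertices; Observation~\ref{deletion} gives $\Delta(\tilde H-D)\le\Delta$. Pad $\tilde H[D]$ to an $n'$-vertex graph with exactly $m$ edges and embed it into $G[B]=U_B$ using $(n',m)$-universality. Then embed $\tilde H-D$ into $A$ one vertex at a time, in the exact manner of the Claim of \Cref{The first regime}: for each unembedded $v$ the image set $S_v$ of its already-embedded neighbours has $|S_v|\le\Delta$, each $S_v$ meets at most $\Delta^2$ other $S_u$'s, so there are $\ge(\text{remaining})/(\Delta^2+1)$ pairwise disjoint $S_v$'s. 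Our choice of $p$ makes $s:=\log k/(2\log(1/p))\ge\Delta$, and Lemma~\ref{Gnp} applied to $\G(k,p)$ supplies an unused vertex of $A$ adjacent to every element of some $S_v$.

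The main obstacle is the endgame of this greedy procedure: once fewer than $k^{3/4}$ pairwise disjoint $S_v$'s are available, Lemma~\ref{Gnp} as stated no longer applies. The slack $\ell\ge k^{3/4}(\Delta^2+1)$ is precisely what circumvents this: we stop the greedy step at the threshold at which Lemma~\ref{lem:random-is-pseudorandom} ceases to apply; at that moment $\ge \ell$ slots in $A$ are still unused, each remaining $v$ has an abundance of feasible images in $\G(k,p)$ (of expected count $\sim \ell\cdot p^\Delta$, which easily exceeds the number of vertices left by a Chernoff-plus-union-bound argument), and we finish by a trivial greedy assignment. The most delicate piece is the simultaneous tuning of $f$, $p$, and $\ell$ to satisfy the edge budget, $s\ge\Delta$, and the slack condition across the whole range $n/2\le m\le n\log n/2^{10}$ (which forces $n$ to be very large anyway, since the interval is nonempty only once $\log n\ge 2^9$).
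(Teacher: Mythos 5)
Your overall architecture — sparse random graph for the low-degree bulk, a recursive $(n',m)$-universal block for the high-degree vertices, finish with \Cref{lem:almost-spanning-to-spaning} — matches the paper's, and your parameter choices for $p$, $s$, $\Delta$, $f$ are all in the right ballpark. But there is a genuine gap in the disjointification step.

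You claim that each $S_v$ meets at most $\Delta^2$ other $S_u$'s because ``every vertex of $S_v$ can belong to at most $\Delta$ other $S_u$'s.'' This is true for images of vertices in $\tilde H - D$ (those have degree at most $\Delta$), but it is \emph{false} for images in $B$. The vertices of $D$ are the $n'$ highest-degree vertices of $\tilde H$, and their degrees can be as large as $|\tilde H| - 1$: nothing in your setup controls them. A single vertex $w\in B$ could therefore lie in $\Theta(k)$ different sets $S_u$, so the greedy disjointification can produce far fewer than $k^{3/4}$ disjoint $S_v$'s, and \Cref{Gnp} cannot be invoked. The paper circumvents exactly this by inserting an extra absorption layer: it first sends the top $n/\log^3 n$ highest-degree vertices into a set $V_1$ of full-degree vertices \emph{inside} the almost-spanning universal graph. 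After that removal, the remaining vertices (including those destined for the recursive block $V_2$) have degree at most $\Delta_2=2m/(n/\log^3 n)\le\log^5 n$, and the disjointification denominator becomes $\Delta_1\Delta_2+1$ rather than $\Delta^2+1$. The edge cost of $V_1$, about $n^2/\log^3 n$, fits comfortably in the $\Theta(m^3/(n\log^3 n))$ budget once $m\ge n/2$, so the fix is affordable; but without it your argument fails. (Your Chernoff-based endgame is a different but plausible route; the paper avoids it by reserving an extra $n^{4/5}$ spare vertices in $V_1$ to absorb the last few embeddings directly, which is cleaner.)
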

\begin{proof}
Note that $n'$ is increasing in $m$ for $m \le \frac{n \log n}{2^{10}}$ so
\begin{equation}\label{n'<n}
    n'\leq 32 \frac{n\log n}{2^{10}}\cdot\frac{\log(2^{10})}{\log n}\leq\frac{2}{3}n.
\end{equation}
 Let $k=n-\frac{n}{\log^3 n}$. Our initial goal is to find a $k$-vertex, $(k-n^{\frac45},m)$-universal graph $G$ with at most $g(n',m)+\frac{17m^3}{n\log^3 n}$ edges. After this is done, we will finish the proof by applying \Cref{lem:almost-spanning-to-spaning}.
\par 
We construct $G$ as follows. Let $p=\frac{m^3}{n^3\log^3n}<\frac12$. Let the vertex set $V$ of $G$ be the union of three disjoint sets $V=V_1\cup V_2\cup V_3$, of sizes $|V_1|=\frac{n}{\log^3n}+n^{\frac45}$, $|V_2|=n'$ and $|V_3|=k-|V_1|-|V_2|.$ 

We construct the edge set in three steps as follows:
\begin{itemize}
    \item  Let $V$ induce a graph with the $(n^{\frac34},\frac{\log n}{2\log\frac{1}{p}},n^{\frac34})$-domination property which has at most $n^2p$ edges. 
    \item Make each vertex in $V_1$ adjacent to all other vertices.
    \item Finally, add at most $g(n',m)$ edges within $V_2$ to make $G[V_2]$ an $(n',m)$-universal graph.
\end{itemize}
The graph from the first step exists due to \Cref{Gnp}, as $\G(n,p)$ has both the $(n^{\frac34},\frac{\log n}{2\log\frac{1}{p}},n^{\frac34})$-domination property and at most $n^2p$ edges whp, and since both these properties are hereditary we may take any subgraph on $k<n$ vertices for our graph. Observe that $G$ has at most $$n^2p+n|V_1|+g(n',m) \le n^2p+\frac{2n^2}{\log^3n}+g(n',m)\leq \frac{17m^3}{n\log^3n}+g(n',m)$$ edges, as $n/2<m$.

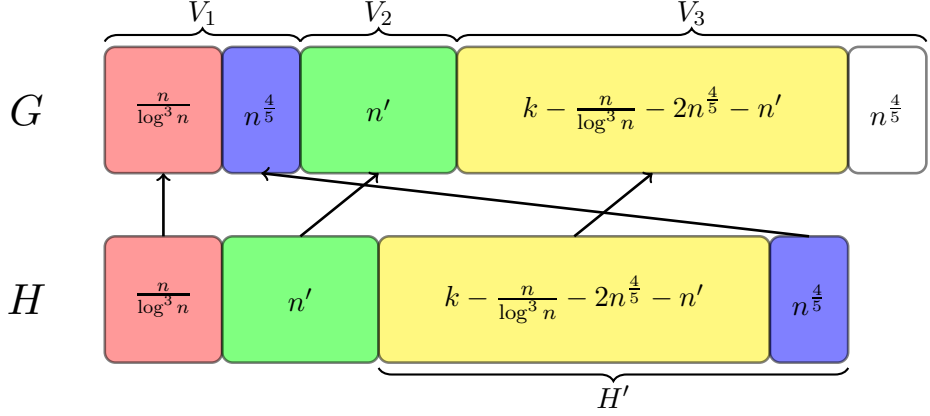
\begin{figure}
\centering
\begin{tikzpicture}[scale = 0.8, xscale=1.3, yscale=0.7]
\defPt {1}{}{a1}

\defPt {-5.5}{0}{a1};
\defPt {-4}{3}{a2};
\defPt {-3}{0}{a3};
\defPt {-1}{3}{a4};
\defPt {4}{0}{a5};
\defPt {5}{3}{a6};

\draw[rounded corners, opacity = 0.5, fill=red!80,line width =1 pt] (a1) rectangle (a2);
\draw[rounded corners, opacity = 0.5, fill=blue!100,line width =1 pt] (a2) rectangle (a3);
\draw[rounded corners, opacity = 0.5, fill=green!100,line width =1 pt] (a3) rectangle (a4);
\draw[rounded corners, opacity = 0.5, fill=yellow!100,line width =1 pt] (a4) rectangle (a5);
\draw[rounded corners, opacity = 0.5, line width =1 pt] (a5) rectangle (a6);

\defPtm{($0.5*(a1)+0.5*(a2)$)}{b1}
\defPtm{($0.5*(a2)+0.5*(a3)$)}{b2}
\defPtm{($0.5*(a3)+0.5*(a4)$)}{b3}
\defPtm{($0.5*(a4)+0.5*(a5)$)}{b4}
\defPtm{($0.5*(a5)+0.5*(a6)$)}{b5}

\node[scale=0.9] (b) at (b1)  {$\frac{n}{\log^3n}$};
\node[scale=1] (b) at (b2)  {$n^{\frac45}$};
\node[scale=1] (b) at (b3)  {$n'$};
\node[scale=1] (b) at (b4)  {$k-\frac{n}{\log^3n}-2n^{\frac45}-n'$};
\node[scale=1] (b) at (b5)  {$n^{\frac45}$};

\defPt {-5.5}{-1.5}{c1}
\defPt {-4}{-4.5}{c2}
\defPt {-2}{-1.5}{c3}
\defPt {3}{-4.5}{c4}
\defPt {4}{-1.5}{c5}

\draw[rounded corners, opacity = 0.5, fill=red!80, line width =1 pt] (c1) rectangle (c2);
\draw[rounded corners, opacity = 0.5, fill=green!100, line width =1 pt] (c2) rectangle (c3);
\draw[rounded corners, opacity = 0.5, fill=yellow!100, line width =1 pt] (c3) rectangle (c4);
\draw[rounded corners, opacity = 0.5, fill=blue!100, line width =1 pt] (c4) rectangle (c5);

\defPtm{($0.5*(c1)+0.5*(c2)$)}{b1}
\defPtm{($0.5*(c2)+0.5*(c3)$)}{b2}
\defPtm{($0.5*(c3)+0.5*(c4)$)}{b3}
\defPtm{($0.5*(c4)+0.5*(c5)$)}{b4}

\node[scale=0.9] (b) at (b1)  {$\frac{n}{\log^3n}$};
\node[scale=1] (b) at (b2)  {$n'$};
\node[scale=1] (b) at (b3)  {$k-\frac{n}{\log^3n}-2n^{\frac45}-n'$};
\node[scale=1] (b) at (b4) {$n^{\frac45}$};

\defPt{0}{1.5}{s}

\defPtm{($0.5*(c1)+0.5*(c2)+(s)$)}{p1}
\defPtm{($0.5*(c2)+0.5*(c3)+(s)$)}{p2}
\defPtm{($0.5*(c3)+0.5*(c4)+(s)$)}{p3}
\defPtm{($0.5*(c4)+0.5*(c5)+(s)$)}{p4}

\defPt{0}{-1.5}{s}
\defPtm{($0.5*(a1)+0.5*(a2)+(s)$)}{q1}
\defPtm{($0.5*(a2)+0.5*(a3)+(s)$)}{q2}
\defPtm{($0.5*(a3)+0.5*(a4)+(s)$)}{q3}
\defPtm{($0.5*(a4)+0.5*(a5)+(s)$)}{q4}





\defPt{0}{3}{s}
\defPtm{($(a1)+(s)$)}{V_11}
\defPtm{($(a3)+(s)$)}{V_12}
\defPtm{($(a3)+(s)$)}{V_21}
\defPtm{($(a4)$)}{V_22}
\defPtm{($(a4)$)}{V_31}
\defPtm{($(a6)$)}{V_32}

  \draw [thick,decorate,decoration={brace,  amplitude=5pt,raise=2pt},yshift=0pt]($(V_11)$) -- ($(V_12)$) node [black,midway,yshift=0.45cm] {$V_1$};
   \draw [thick,decorate,decoration={brace,  amplitude=5pt,raise=2pt},yshift=0pt]($(V_21)$) -- ($(V_22)$) node [black,midway,yshift=0.45cm] {$V_2$};
    \draw [thick,decorate,decoration={brace,  amplitude=5pt,raise=2pt},yshift=0pt]($(V_31)$) -- ($(V_32)$) node [black,midway,yshift=0.45cm] {$V_3$};

  \defPt{0}{-3}{s}
\defPtm{($(c5)+(s)$)}{R1}
\defPtm{($(c3)+(s)$)}{R2}
   \draw [thick,decorate,decoration={brace,  amplitude=5pt,raise=2pt},yshift=0pt]($(R1)$) -- ($(R2)$) node [black,midway,yshift=-0.45cm] {$H'$};
  
   \draw[->,line width= 1 pt] (p1) -- (q1);
   \draw[->,line width= 1 pt] (p2) -- (q3);
   \draw[->,line width= 1 pt] (p3) -- (q4);
   \draw[->,line width= 1 pt] (p4) -- (q2);
   
   \node[scale=1.5] (G) at(-6.5,1.5) {$G$};
  \node[scale=1.5] (H) at(-6.5,-3) {$H$};
   
 \end{tikzpicture}

\caption{Final state of the embedding process in the second regime} \label{fig:R2}
\end{figure}

We now show that $G$ is indeed $(k-n^{\frac45},m)$-universal. Let $H$ be an arbitrary graph on $k-n^{\frac45}$ vertices and $m$ edges. Our task is to find an embedding of $H$ into $G$. 
\par First, embed the $\frac{n}{\log^3 n}$ vertices of highest degree from $H$ arbitrarily into $V_1$, and note that $n^{\frac45}$ vertices in $V_1$ remain free. Second, embed the next $n'$ vertices of highest degree into $V_2$. As the subgraph of $H$ induced by those vertices has less than $m$ edges, it can be embedded into $V_2$, as $G[V_2]$ is $(n',m)$-universal. 
\par Let $H'$ be the subgraph consisting of the remaining vertices of $H.$ Note that $|H'|=k-n^{\frac45}-\frac{n}{\log^3n}-n'=|V_3|$. We embed vertices of $H'$ one by one into $V_3$ and delete them from $H'$ until only $n^{\frac45}$ are left. For each $v\in H'$ let $S_v$ be the image set of all $v$'s neighbours in $H$ which are already embedded into $V_2\cup V_3$. At each step we argue that we can find a vertex $v\in H'$ and a free vertex $u\in V_3$ such that $u$ is a common neighbour of $S_v$ and we embed $v$ into $u$.\par 
Note that the size of each $S_v$ at each step is at most $\Delta_1=2m/n'=\frac{2\log n}{32\log(\frac{n\log n}{m})}\leq \frac{\log n}{2\log\frac{1}{p}}\leq \log n$, because $\Delta_1$ is an upper bound on the degree of vertices in $H'$. Note also that each vertex in $V_2\cup V_3$ can only be in at most $\Delta_2=2m/(\frac{n}{\log^3n})\leq \log^5n$ sets $S_v$, as vertices in $V_2\cup V_3$ cannot be images of one of the $\frac{n}{\log^3n}$ vertices of highest degree in $H$, so their degree in $H$ is at most $2m/(\frac{n}{\log^3n})$. Now we find a disjoint collection of $S_v$'s, by choosing them one by one and each time deleting all other sets which intersect the chosen one. Thus we get a collection of at least $\frac{|H'|}{\Delta_1\Delta_2+1}\ge n^{\frac34}$ disjoint sets $S_i$, since $|H'|\geq n^{\frac45}$. \par
To summarise we have found $n^{\frac34}$ disjoint sets $S_v$ each of size at most $\frac{\log n}{2\log(\frac{1}{p})}$. The set of remaining free vertices of $V_3$ has size $|H'| \ge n^{\frac45}$ and is disjoint from these $S_v$'s. Therefore, the domination property of $G$ implies that there is a free vertex in $V_3$ which is a common neighbour of all vertices in some set $S_v$. We embed $v$ into this vertex. We update each $S_v$ after every single embedding. When fewer than $n^{\frac45}$ vertices are left in $H'$ we embed them into the remaining free part of $V_1$ and are done. \par
We have shown the existence of a $k$-vertex graph $G$ which is $(k-n^{\frac45},m)$-universal. By making use of \Cref{lem:almost-spanning-to-spaning} and noting that $$k-\floor{ \frac{k(n-k)}{2m+k}}\leq k-\frac{n(n-k)}{9m}\leq k-\frac{n-k}{\log n}\le k-\frac{n}{\log^4 n} \leq k-n^{\frac45},$$ we conclude that by adding $n-k=\frac{n}{\log^3 n}$ full degree vertices to $G$ we get an $n$-vertex, $(n,m)$-universal graph. The number of edges of this graph is at most $$g(n',m)+\frac{17m^3}{n\log^3 n}+\frac{n^2}{\log^3n}\leq g(n',m)+\frac{25m^3}{n\log^3n}$$ as $n/2\leq m$, which finishes the proof.
\end{proof}

\vspace{-0.3cm}
\begin{cor}\label{cor:upper-bound-small}
For $n/2 \le m $ we have $$g(n,m) \le \OO\left( \frac{m^2}{\log ^2 m}\right).$$ 
\end{cor}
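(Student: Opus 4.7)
The plan is to iterate Lemma \ref{lem:recursive step} until the residual term can be bounded by the trivial estimate. First, if $m \ge n \log n/2^{10}$, then $K_n$ is trivially $(n,m)$-universal; since $\log m = \Theta(\log n)$ in this range, $\binom{n}{2} = O\bigl((m/\log m)^2\bigr)$ and we are done. So assume $n/2 \le m < n \log n/2^{10}$.

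Set $n_0 = n$ and define recursively $n_{i+1} = 32 m \log(n_i \log n_i / m)/\log n_i$, continuing as long as the hypothesis $n_i/2 \le m < n_i \log n_i /2^{10}$ of Lemma \ref{lem:recursive step} is in force. Inequality (\ref{n'<n}) from the proof of that lemma gives $n_{i+1} \le (2/3) n_i$, so the sequence halves geometrically; in particular $n_i \le 2m$ is preserved, so $n_i/2 \le m$ never fails, and the iteration must terminate at some step $T$ where $n_T \log n_T \le 2^{10} m$ first holds.

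Next I would argue that $n_T = O(m/\log m)$. Throughout the iteration $n_i \log n_i / m \ge 2^{10}$, so the logarithm appearing in the recursion is bounded below by a positive constant; combined with $\log n_i \le \log n = \Theta(\log m)$, this forces $n_i = \Omega(m/\log m)$ for $i \ge 1$ and hence $\log n_i = \Theta(\log m)$. The stopping condition $n_T \log n_T \le 2^{10} m$ then yields $n_T = O(m/\log m)$, so $g(n_T, m) \le \binom{n_T}{2} = O(m^2/\log^2 m)$.

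Finally, unwinding the recursion gives
$$ g(n,m) \le \sum_{i=0}^{T-1} \frac{32 m^3}{n_i \log^3 n_i} + g(n_T, m). $$
The geometric decay $n_{i+1} \le (2/3) n_i$ makes $\sum_i 1/n_i = O(1/n_T)$, and $\log n_i \ge \log n_T$ for every $i$, so the summation contributes at most $O\bigl(m^3/(n_T \log^3 n_T)\bigr) = O(m^2/\log^2 m)$, completing the proof. The only mildly delicate point is to check that the hypotheses of Lemma \ref{lem:recursive step} remain valid along the iteration, but this reduces to verifying $n_i \le 2m$, which is immediate from $n_0 \le 2m$ together with $n_{i+1} \le (2/3) n_i$.
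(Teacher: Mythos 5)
Your proposal is correct, and although it proves the same corollary by iterating \Cref{lem:recursive step}, the bookkeeping is genuinely different from the paper's. The paper runs an induction on $n$ (for fixed $m$) with the carefully tuned inductive hypothesis $g(n,m)\le\frac{2^{22}m^2}{\log^2 m}\bigl(1-\frac{2m}{n\log n}\bigr)$; the nontrivial part there is guessing the correction term $1-\frac{2m}{n\log n}$, which is exactly what makes the induction step close. You instead unroll the recursion explicitly, observe from inequality \eqref{n'<n} that $n_{i+1}\le\frac23 n_i$, and therefore the increments $\frac{32m^3}{n_i\log^3 n_i}$ form a series dominated by a geometric one, contributing $O\bigl(\frac{m^3}{n_T\log^3 n_T}\bigr)$ in total. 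Combined with $n_T=\Theta(m/\log m)$ and $\log n_T=\Theta(\log m)$ (both of which you verify correctly via $32m/\log(2m)\le n_i\le 2m$ throughout the iteration), this gives the bound directly. Your version trades the cleverness of finding the right inductive invariant for a slightly longer chain of explicit estimates, and is arguably more transparent. One very small point of care, which you handle but could state more explicitly: the lower bound $n_i=\Omega(m/\log m)$ applies to $n_0$ as well (it follows from $m<n_0\log n_0/2^{10}$ together with $\log n_0\le\log(2m)$), so $\log n_i=\Theta(\log m)$ holds uniformly over $0\le i\le T$, which is what you need both to pull $\log^{-3} n_T$ out of the sum and to convert the stopping condition $n_T\log n_T\le 2^{10}m$ into $n_T=O(m/\log m)$.
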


\vspace{-0.7cm}
\begin{proof}
If $n<\frac{16m}{ \log m}$ then the statement holds trivially since a graph on $n$ vertices has at most $\binom{n}{2}$ edges.

 For any fixed $m$ we will use the previous lemma and induction on $n$  to show 
 \begin{equation}\label{induction hypothesis}
 g(n,m) \le \frac{2^{22}m^2}{\log^2 m}\left(1-\frac{2m}{n \log n}\right)
 \end{equation}
 holds for all $n$ such that $\frac{16m}{\log m} \le n \le 2m$. The lower bound on $n$ implies that $n \geq \sqrt{m}$ and therefore $\log m \leq 2 \log n$.
 For the base of the induction consider all $n$ in our range such that $n \le \frac{2^{11}m}{\log m}$. In this case the inequality \eqref{induction hypothesis} holds, since $n \ge \frac{16m}{\log m}$ implies $\frac{2m}{n \log n}\le \frac12$ and $n^2 \le \frac{2^{22}m^2}{\log^2 m}$ so the RHS is at least $n^2/2$ and as before any graph on $n$ vertices has at most $\binom{n}{2}$ edges.

 We now proceed to the induction step. Let $2m \ge n \ge \frac{2^{11}m}{\log m}$ and assume that the statement holds for all smaller $n$ (but still larger than $\frac{16m}{\log m}$).
 Since our bounds on $n$ imply $\frac{n\log n}{2^{10}}\geq m \ge n/2$ we have that:
 \begin{align*}
     g(n,m)&\leq \frac{32m^3}{n\log^3 n}+g(n',m)
 \end{align*}
 where $n'=\frac{32m}{\log n}\cdot \log(\frac{n\log n}{m})$ is given by the previous lemma. By inequality \eqref{n'<n} we have $n'<n$ and from the definition $n'\ge \frac{32m}{\log n}\ge \frac{16m}{\log m}$ so we can apply the induction hypothesis to $n'$ to obtain
 \begin{align*}
     g(n,m)&\leq \frac{32m^3}{n\log^3 n}+\frac{2^{22}m^2}{\log ^2 m}\left(1-\frac{2m}{n'\log n'}\right)
     \\ &= \frac{2^{22}m^2}{\log^2 m}\left(1+\frac{m\log^2m}{2^{17}n\log^3n}-\frac{2m}{n'\log n'}\right)
     \end{align*}
 In order to finish the proof it is enough to show that:
 \[\frac{2m}{n'\log n'}-\frac{m\log^2m}{2^{17}n\log^3n}\geq \frac{2m}{n\log n}\]
 or equivalently  \[\frac{2\log n}{n'\log n'}-\frac{\log^2m}{2^{17}n\log^2n}\geq \frac{2}{n}.\]
Recall that $\log m \leq 2 \log n$ and $\log n' < \log n$ (since $n'<n$). Thus it is enough to show
\[\frac{2}{n'}\geq\frac{3}{n}.\]
which is true by inequality \eqref{n'<n}, so we are done.
\end{proof}

If one looks at what kind of graph this recursive argument builds, in each step it will add a few more vertices of full degree, in total at most $\OO(\frac{m^2}{n \log^2 m})$ since otherwise we would have used too many edges. The rest of the graph consists of several blocks which are initially small and have a large number (of randomly chosen) neighbours and progressively the blocks become bigger and bigger but they have less (randomly chosen) neighbours. The number of blocks we see is the number of times we needed to call upon the recursion and from definition of $n'$ we roughly have $\frac{ n' \log n'}{m} \approx \log \left(\frac{n \log n}{m} \right)$ where we used that $\log n' \approx \log n$ as throughout the argument we stay in the same range depending on $m$. What this means is that in each step we take a logarithm of the current value of $\frac{n \log n}{m}$ up until it reaches a constant. I.e. we need $\OO\left(\log^*\left(\frac{n \log n}{m} \right)\right)$\footnote{$\log^*x$ is the function defined as the number of times we need to apply the logarithm function to $x$ in order to get to $1$, applied to the number of atoms in the universe it evaluates to about $5$.} many steps. See \Cref{fig:Recursion unpacked} for an illustration of the constructed graph.

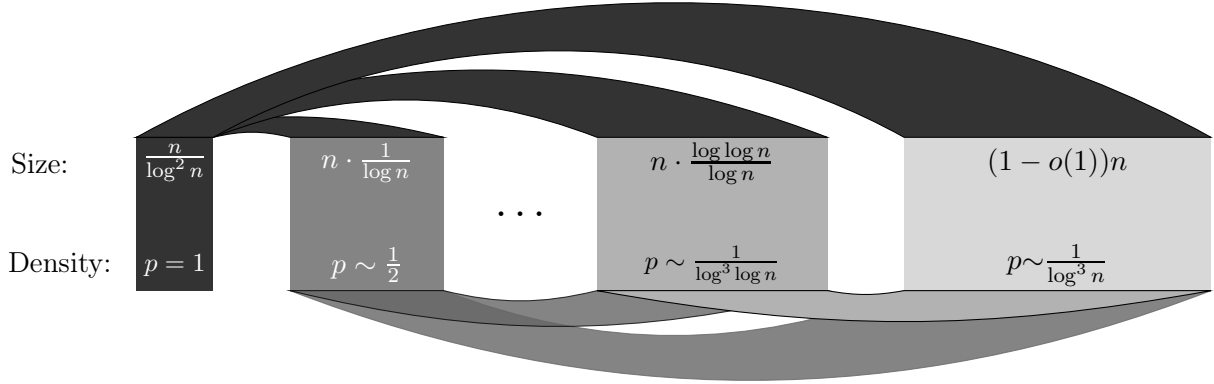
\begin{figure}
\centering
\begin{tikzpicture}[scale = 0.85, xscale=1.2, yscale=0.8]

\defPt {0}{0}{a1};
\defPt {1}{3}{a2};

\defPt {2}{0}{a3};
\defPt {4}{3}{a4};

\defPt {6}{0}{a5};
\defPt {9}{3}{a6};

\defPt {10}{0}{a7};
\defPt {14}{3}{a8};

\fill[opacity = 1, black!80] (a1) rectangle (a2);
\fill[ opacity = 0.8, black!60] (a3) rectangle (a4);
\fill[ opacity = 1, black!30] (a5) rectangle (a6);
\fill[opacity = 0.5, black!30] (a7) rectangle (a8);

\defPt{0}{3}{s}
\defPtm{($(a1)+(s)$)}{A1}
\defPtm{($(a3)+(s)$)}{A3}
\defPtm{($(a5)+(s)$)}{A5}
\defPtm{($(a7)+(s)$)}{A7}

\defPt{0}{-3}{s}
\defPtm{($(a2)+(s)$)}{A2}
\defPtm{($(a4)+(s)$)}{A4}
\defPtm{($(a6)+(s)$)}{A6}
\defPtm{($(a8)+(s)$)}{A8}


\filldraw[opacity=1, fill=black!80]
  (A1) to[bend left=20]
  (a4) --
  (A3) to[bend right=20]
  (a2) -- cycle;
  
  \filldraw[opacity=1, fill=black!80]
  (A1) to[bend left=30]
  (a6) --
  (A5) to[bend right=30]
  (a2) -- cycle;

  \filldraw[opacity=1, fill=black!80]
  (A1) to[bend left=40]
  (a8) --
  (A7) to[bend right=40]
  (a2) -- cycle;
 \filldraw[color=black!60, opacity=0.8, fill=black!60]
  (a3) to[bend right=30]
  (A8) --
  (a7) to[bend left=30]
  (A4) -- cycle;
  
  \filldraw[opacity=0.8, fill=black!60]
  (a3) to[bend right=20]
  (A6) --
  (a5) to[bend left=20]
  (A4) -- cycle;

\filldraw[opacity=1, fill=black!30]
  (a5) to[bend right=15]
  (A8) --
  (a7) to[bend left=15]
  (A6) -- cycle;

\defPt{-1}{0}{s}
\defPtm{($(s)+0.5*(a3)+0.5*(A7)$)}{tackice}
 \node[scale=1.5] (tackice) at (tackice) {$\ldots$};
\defPt{0}{-0.5}{s}
\defPtm{($0.5*(A1)+0.5*(a2)+(s)$)}{S1}
 \node[color=white, scale=1] (S1) at (S1) {$\frac{n}{\log^2n}$};
 \node[scale=1] () at ($(S1)-(1.76,0)$) {Size:};
\defPtm{($0.5*(A3)+0.5*(a4)+(s)$)}{S2}
 \node[color=white, scale=1] (S2) at (S2) {$n\cdot \frac{1}{\log n}$};
\defPtm{($0.5*(A5)+0.5*(a6)+(s)$)}{S3}
 \node[ scale=1] (S3) at (S3) {$n \cdot \frac{\log\log n}{\log n}$};
\defPtm{($0.5*(A7)+0.5*(a8)+(s)$)}{S4}
 \node[ scale=1] (S4) at (S4) {$(1-o(1))n$};

\defPt{0}{0.5}{s}
\defPtm{($0.5*(a1)+0.5*(A2)+(s)$)}{S1}
 \node[color=white, scale=0.9] (S1) at (S1) {$p=1$};
 \node[scale=1] () at ($(S1)-(1.5,0)$) {Density:};
\defPtm{($0.5*(a3)+0.5*(A4)+(s)$)}{S2}
 \node[color=white, scale=1] (S2) at (S2) {$p\sim \frac{1}{2}$};
\defPtm{($0.5*(a5)+0.5*(A6)+(s)$)}{S3}
 \node[ scale=0.9] (S3) at (S3) {$p\sim \frac{1}{\log^3 \log n}$};
\defPtm{($0.5*(a7)+0.5*(A8)+(s)$)}{S4}
 \node[ scale=1] (S4) at (S4) {$p{\sim} \frac{1}{\log^3 n}$};

 \end{tikzpicture}
\caption{The $(n,n)$-universal graph constructed by the recursion. Each vertex in a block has an edge towards another vertex in its own block or a vertex in a subsequent block with probability $p$. The density of the random edges, depicted by different shades of gray, decreases as the size of the block increases. } \label{fig:Recursion unpacked}
\end{figure}

\subsection{Completing the picture}
We are now ready to combine our results to show our main theorem. 
\begin{proof}[ of \Cref{thm:main2}]
The lower bounds follow directly from \Cref{lem:counting}.
In \Cref{cor:upper-bound-small} we have proven the desired upper bound for $g(n,m)$ when $n/2\leq m\leq n\log n$. \Cref{thm:upper-bound-big} gives us the bound when $n\log n<m<n^{3/2-\varepsilon}$. 

The remaining case is when $m<n/2$. Notice that in this case $g(n,m)\leq g(2m,m)=\Theta\left(\frac{m^2}{\log^2m}\right)$ since any graph with $e$ edges has at most $2m$ non-isolated vertices.
\end{proof}

\begin{rem}
We have shown that the function $g(n,m)$ exhibits different behaviour in the two regimes. Namely, when $m=o(n\log n)$ then $g(n,m)=\Theta\left(\frac{m^2}{\log^2 m}\right)=o(n^2)$ while if $m=\omega(n\log n)$ we have that $g(n,m)=\binom{n}{2}-\Theta\left(\frac{n^3\log n}{m}\right)=(1-o(1))\binom{n}{2}$. From our arguments one can see that as we transition from one regime to another both bounds become quadratic in $n$. The following corollary shows that $m=\Theta (n\log n)$ is precisely the transitioning point between the two behaviours.
\end{rem}

\begin{cor}
Let $\mu>0$ be a positive constant. Then there exist positive constants $0<c_1,c_2<1$ such that $c_1\binom{n}{2}\leq g(n,\mu n\log n)\leq  c_2\binom{n}{2}$
for all positive integers $n$.
\end{cor}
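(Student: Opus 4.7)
The plan is to deduce both bounds from results already established in the paper, using a case split depending on $\mu$ and a simple monotonicity for the upper bound.

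For the lower bound, I would invoke Lemma~\ref{lem:counting} with $m = \lceil \mu n \log n \rceil$. When $\mu > 1$ and $n$ is large, $m > n \log n$ so the first case of the lemma gives $g(n,m) \ge \binom{n}{2}\bigl(1 - \tfrac{n\log n}{m}\bigr) \ge \tfrac{1}{2}(1 - 1/\mu)\binom{n}{2}$. When $\mu \le 1$ the second case applies, and since $\log m = (1+o(1))\log n$, we have $m/\log m = (1+o(1))\mu n$, yielding $g(n,m) \ge \tfrac{1}{4}\binom{m/\log m}{2} = \Omega(\mu^2 n^2)$. Either way one obtains $c_1 = c_1(\mu) > 0$ valid for $n$ sufficiently large in terms of $\mu$.

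For the upper bound, I would first record the following monotonicity: $g(n,m_1) \le g(n,m_2)$ whenever $m_1 \le m_2 \le \binom{n}{2}$, since any $n$-vertex graph with $m_1$ edges can be augmented by extra edges to one with $m_2$ edges, which must appear in every $(n,m_2)$-universal graph. Set $m^* = \lceil 2(\mu + 1) n \log n\rceil$. For large $n$ this lies in the window $n \log n < m^* < n^{5/4}$, so Theorem~\ref{thm:upper-bound-big} (applied with $\varepsilon = 1/4$) produces an $(n, m^*)$-universal graph missing at least $\tfrac{n^3 \log n}{2^{14}\, m^*} = \Theta_\mu(n^2)$ edges; hence $g(n, \mu n \log n) \le g(n, m^*) \le \binom{n}{2} - \Theta_\mu(n^2)$, yielding some $c_2 = c_2(\mu) < 1$.

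Both asymptotic bounds hold for $n$ above some threshold $n_0 = n_0(\mu)$. For the finitely many $n < n_0$ the statement is either vacuous (if $\mu n \log n > \binom{n}{2}$ then $\HH(n,m) = \emptyset$ and $g = 0$) or one has the trivial strict bounds $0 < g(n, \mu n \log n) < \binom{n}{2}$; in either case $c_1$ can be shrunk and $c_2$ enlarged by finitely many constant factors to absorb these cases. The only subtlety worth flagging is the boundary $\mu \approx 1$, where the first part of Lemma~\ref{lem:counting} degenerates; the case split $\mu > 1$ vs.\ $\mu \le 1$ precisely handles this, so the proof is a matter of bookkeeping rather than new ideas.
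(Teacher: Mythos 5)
Your proof is correct, and for the upper bound it takes a genuinely different route from the paper. For the lower bound both you and the paper invoke Lemma~\ref{lem:counting}; the paper simply says ``the lower bound follows'' while you spell out the case split $\mu>1$ vs.\ $\mu\le 1$, which is the right thing to do since the two branches of the lemma have different forms near the boundary $m=n\log n$.

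For the upper bound the paper argues by cases and, in the problematic window $\tfrac{n\log n}{2^{12}}\le m\le n\log n$, uses the doubling inequality $g(n,x)\le 2g(n/2,x)+(n/2)^2$ (proved by taking two disjoint copies of an $(n/2,x)$-universal graph and joining them completely), iterating a $\mu$-dependent constant number of times until $m>(n/2^\ell)\log(n/2^\ell)$ so that Theorem~\ref{thm:upper-bound-big} applies. You instead observe that $g(n,\cdot)$ is monotone nondecreasing in the edge parameter --- any $n$-vertex graph with $m_1$ edges sits inside one with $m_2\ge m_1$ edges, so any $(n,m_2)$-universal graph is automatically $(n,m_1)$-universal --- and then bump $m$ up to $m^*=\Theta_\mu(n\log n)$ with $m^*>n\log n$, where Theorem~\ref{thm:upper-bound-big} applies directly and gives a deficit of $\Theta_\mu(n^2)$ edges. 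This monotonicity is a clean one-line fact not explicitly recorded in the paper, and it lets you skip the recursive doubling step entirely; it keeps $n$ fixed rather than halving it, so there is no accumulation of the $(n/2)^2$ error terms to track. In short, your upper bound argument is simpler and at least as tight, while your lower bound is the same as the paper's with the cases made explicit. The only minor bookkeeping point, which you flag correctly, is absorbing the finitely many small $n$ into the constants.
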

\begin{proof}

The lower bound follows from \Cref{lem:counting}. For the upper bound we have the following cases:
\begin{itemize}
    \item  When $m>n\log n$ the claim follows from \Cref{thm:upper-bound-big}.
    \item  When $m<\frac{n\log n}{2^{12}}$ the claim follows from \eqref{induction hypothesis}.
    \item If $\frac{n\log n}{2^{12}}\leq e\leq  n\log n$ then we use the inequality $g(n,x)\leq 2g(n/2,x)+(n/2)^2$ iterating it a constant number of times until we are able to use \Cref{thm:upper-bound-big}. This inequality holds as one can construct an $(n,x)$-universal graph by taking two disjoint copies of a $(n/2,x)$-universal graph and making every two vertices from different copies adjacent.
\end{itemize}
\vspace{-0.8 cm}
\end{proof}

\section{Concluding Remarks}

In this paper we complete the study of the behaviour of $f(n,e)$ defined as the maximum size of an $(n,e)$-unavoidable graph.

As already mentioned in the introduction, in order to get a lower bound on $f(n,e)$ we want to find a graph $H$ with $f(n,e)$ edges and Tur\'an number $\ex(n,H)\le e.$ Inverting this statement we want to find graphs with fixed number of edges which minimise the Tur\'an number. The universal graph we used in the proof of Theorem \ref{thm:upper-bound-big} consisted of a collection of vertices joined to every other vertex and a random graph on the remainder. By transferring this result into unavoidability language, as discussed at the end of \Cref{The first regime}, we obtain that whp the appropriate random graph $\G(n/4,q)$ is the desired minimiser of the Tur\'an number, provided $1/2>q>n^{-1/2+\eps}$. Being a bit more careful with our estimates one can even obtain an almost spanning version of this result. Namely we can get:

\begin{thm}
Let $\eps>0$. There is a $\delta>0$ such that for $G\sim  \G\big((1-\varepsilon)n, q\big)$ we have whp that $\ex(n,G)=\left(1-\Theta\left(\frac{\log n}{nq}\right)\right)\binom{n}{2},$ provided $n^{-1/2+\eps}<q<\delta$.
\end{thm}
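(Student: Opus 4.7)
The plan is to prove the matching upper and lower bounds on $\ex(n,G)$ by two separate techniques. For the upper bound $\ex(n,G)\le (1-\Theta(\log n/(nq)))\binom{n}{2}$, I would adapt the construction from \Cref{thm:upper-bound-big} so that the random piece of the universal graph is exactly $\bar G$. Concretely, I would build an $n$-vertex $(n,m)$-universal graph $U$ for $m=\Theta(n\log n/q)$ of the form ``$\eps n$ full-degree vertices together with an induced copy of $\bar G\sim\G((1-\eps)n,1-q)$''. Following the same two-step scheme as in the paper, first construct a $k$-vertex graph consisting of $\alpha k$ full-degree vertices and $(1-\alpha)k=(1-\eps)n$ vertices forming $\bar G$, with $\alpha=\Theta(\eps)$ and $k=(1-\eps)n/(1-\alpha)$, and then add the remaining $n-k$ full-degree vertices via \Cref{lem:almost-spanning-to-spaning}. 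The required $(r,s,t)$-domination property of $\bar G$ with $p=1-q$ and $s=\Theta(\log n/q)$ is supplied by \Cref{lem:random-is-pseudorandom}, and any $H$ on $n$ vertices with $m$ edges is embedded into $U$ exactly as in the claim preceding \Cref{thm:upper-bound-big}: place the top-degree vertices of $H$ into the first half of the full-degree part, embed the bulk into $\bar G$ via the domination step, and absorb the residual tail into the second half of the full-degree part. Since the complement of $U$ is $G$ together with $\eps n$ isolated vertices (which do not affect containment), the $(n,m)$-universality of $U$ translates, via the duality in \eqref{eq:relation}, into $G$ being $(n,\binom{n}{2}-m)$-unavoidable.

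For the lower bound, I would use a first-moment argument with two independent random graphs. Sample $\bar H\sim\G(n,p')$ independently of $G$, with $p'=C\log n/(nq)$ for a constant $C>2/(1-\eps)^2$. For any fixed injection $\phi\colon V(G)\to V(H)$, the pairs $\{u,v\}\in\binom{V(G)}{2}$ are ``obstructions'' to $\phi$ (edges of $G$ mapped to non-edges of $H$) independently with probability $qp'$, so the probability that $\phi$ is an embedding equals $(1-qp')^{\binom{(1-\eps)n}{2}}$. A union bound over the at most $n^n$ injections gives
\[
\mathbb E\bigl[\#\{\phi\text{ embedding }G\text{ into }H\}\bigr]\le \exp\!\left(n\log n-qp'(1-\eps)^2 n^2/2\right)=o(1).
\]
Combined with the Chernoff bound $e(\bar H)\le (1+o(1))p'\binom{n}{2}=O(n\log n/q)$ and a Fubini-type transfer from the joint distribution of $(G,H)$ to $G$ alone, this yields with high probability over $G$ the existence of a $G$-free host $H$ on $n$ vertices with $e(H)\ge\binom{n}{2}-O(n\log n/q)$, matching the upper bound.

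The main technical obstacle is in the upper bound, where the tight proportions $\eps n:(1-\eps)n$ of full-degree to random vertices leave no direct slack to run the domination embedding in a single pass. Splitting the full-degree part of $U$ into two roles---one for $H$'s top-degree vertices, one for absorbing the tail left over by the domination step---and then routing everything through \Cref{lem:almost-spanning-to-spaning} to generate the required wiggle room circumvents this obstruction. The only remaining verification is that the domination condition $3\log n\le p^s\min(r/s,t)$ of \Cref{lem:random-is-pseudorandom} holds with $p=1-q$ and $s=\Theta(\log n/q)$; this is routine since $p^s\ge e^{-2qs}=n^{-O(1)}$ while $r$ and $t$ supplied by the construction are at least $n^{1-O(\eps)}$, which lies within our range $q>n^{-1/2+\eps}$.
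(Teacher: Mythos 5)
Your upper bound argument is essentially the paper's: build an $(n,m)$-universal graph for $m=\Theta(n\log n/q)$ out of a bounded proportion of full-degree vertices plus an induced $\G((1-\eps)n,1-q)$, verify the domination condition via \Cref{lem:random-is-pseudorandom}, run the embedding of the claim before \Cref{thm:upper-bound-big}, and absorb the last few vertices through \Cref{lem:almost-spanning-to-spaning}. Your identification of the bottleneck (the need to shrink the full-degree part to $\Theta(\eps n)$ while still having room for the high-degree prefix, the domination step, and the tail) and your plan to split the full-degree part into two roles while routing the final slack through \Cref{lem:almost-spanning-to-spaning} is exactly the ``bit more careful with our estimates'' modification the paper is gesturing at. The calculation works provided you take $m = c\,\eps n\log n/q$ with $c$ small enough in terms of $\eps$ (so that $p^s \gtrsim n^{-O(c)}$ is not too tiny while $r$ and $t$ stay polynomially large), which you have correctly noted lies within the range $q>n^{-1/2+\eps}$.

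For the lower bound, however, you take a genuinely different and considerably heavier route than the paper needs. You introduce a second independent random graph $\bar H\sim\G(n,p')$ and run a first-moment union bound over all injections $V(G)\to V(H)$, then transfer back to a statement about $G$ alone via Fubini. This is correct: with $p'=C\log n/(nq)$ and $C>2/(1-\eps)^2$, the expected number of embeddings is $o(1)$ while $e(\bar H)$ concentrates, and the transfer argument is standard. But the paper already has a purely deterministic tool that applies directly: \Cref{lem:counting} states that any $(n,m)$-universal graph with $m>n\log n$ must have at least $\binom{n}{2}\left(1-\frac{n\log n}{m}\right)$ edges. Writing $G'$ for $G$ padded with $\eps n$ isolated vertices, the duality \eqref{eq:relation} says $\ex(n,G)<e$ is equivalent to $\overline{G'}$ being $(n,\binom{n}{2}-e)$-universal. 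Whp $e(\overline{G'})=\binom{n}{2}-(1\pm o(1))q(1-\eps)^2\binom{n}{2}$, so for $m=Cn\log n/q$ with any fixed $C>1/(1-\eps)^2$ the graph $\overline{G'}$ has strictly fewer than $\binom{n}{2}(1-n\log n/m)$ edges and hence cannot be $(n,m)$-universal by \Cref{lem:counting}; this immediately yields $\ex(n,G)\ge\binom{n}{2}-m=\left(1-O\!\left(\frac{\log n}{nq}\right)\right)\binom{n}{2}$. The counting argument requires no second source of randomness, no Chernoff bound, and no Fubini transfer, and the only probabilistic input is that $e(G)$ concentrates. Your argument buys nothing extra here -- it re-derives a weaker-looking but equivalent bound by an independent random-host construction -- whereas the paper's counting lemma is already at hand and disposes of the lower bound in a line.
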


Note that for this range of $q$ the chromatic number of the above random graph $G$ whp satisfies $\chi(G) =\Theta\left(\frac{nq}{\log n}\right)$.
Interestingly, this shows that $\ex(n,G)$ behaves essentially the same as we would expect from the Erd\H{o}s-Stone-Simonovits theorem \cite{ES,ESS}, i.e. $\ex(n,G)=\left(1-\frac{\Theta(1)}{\chi(G)}\right)\binom{n}{2}$, despite the fact that $G$ is almost spanning! In contrast, in order for the Erd\H{o}s-Stone-Simonovits theorem to apply, the size of the host graph is required to be significantly larger than the graph being embedded. The question of the exact requirement on the parameters was considered by Bollob\'as \cite{bollobas} and Chv\'atal and Szemer\'edi \cite{chvatal-szemeredi} who showed that the best one can hope for in general is that the Erd\H{o}s-Stone-Simonovits theorem holds for graphs of order $O(\log n)$, even in our approximate sense.

It could be interesting to determine whether the above theorem extends for values of $q$ smaller than $n^{-1/2}.$ The main obstacle for our argument is that our current embedding strategy requires too strong a domination property, which in particular is no longer satisfied by $\G(n,q).$ In light of this it might be interesting to try to find a weaker version of our property which would suffice for our embedding argument but is still satisfied by the sparser random graphs. Another possible benefit of such a weaker property is that it could possibly allow one to construct explicit universal graphs which one could use for our argument and answer a question of Alon and Asodi \cite{alon2002sparse} and later Hetterich, Parczyk and Person\cite{hetterich}.

Several bounded degree analogues of our universality problem arise quite naturally. Alon and Capalbo \cite{alon2008optimal} show that the minimal number of edges in a graph which is universal for the family $\mathcal{E}(n,d)$ of $n$-vertex graphs of maximum degree at most a constant $d$ is $\Theta\left(n^{2-2/d}\right)$. This is asymptotically very different from $g(n,dn)=\Theta\left(\frac{n^2}{\log^2n}\right)$ which we get from Theorem \ref{thm:main2}. 
We determine $g(n,dn)$ up to a constant factor for any values of $n$ and $d$, even if we allow $d$ to depend on $n$. However, very little seems to be known about the above bounded degree problem if one allows $d$ to grow with $n$. In particular, what is the minimal number of edges in an $\mathcal{E}(n,d)$-universal graph when $d$ is allowed to depend on $n$. Another, possibly an even closer analogue, is what happens if we consider the spanning variant of this problem. So, what is the smallest number of edges in a graph on exactly $n$ vertices which is $\mathcal{E}(n,d)$-universal, where $d$ is allowed to depend on $n$?


\textbf{Acknowledgments.} The authors would like to thank Tuan Tran for bringing unavoidability problems to our attention. We are also grateful to the anonymous referees for their careful reading of the paper and many useful suggestions.


\providecommand{\bysame}{\leavevmode\hbox to3em{\hrulefill}\thinspace}
\providecommand{\MR}[1]{}
\providecommand{\MRhref}[2]{%
  \href{http://www.ams.org/mathscinet-getitem?mr=#1}{#2}
}
\providecommand{\href}[2]{#2}

\bibliographystyle{amsplain}

\providecommand{\bysame}{\leavevmode\hbox to3em{\hrulefill}\thinspace}
\providecommand{\MR}{\relax\ifhmode\unskip\space\fi MR }
\providecommand{\MRhref}[2]{%
  \href{http://www.ams.org/mathscinet-getitem?mr=#1}{#2}
}
\providecommand{\href}[2]{#2}

\end{document}